\renewenvironment{thebibliography}[1]
     {\section*{\refname}%
      \@mkboth{\MakeUppercase\refname}{\MakeUppercase\refname}%
      \list{\@biblabel{\@arabic\c@enumiv}}%
           {\settowidth\labelwidth{\@biblabel{#1}}%
            \leftmargin\labelwidth
            \advance\leftmargin\labelsep
            \@openbib@code
            \usecounter{enumiv}%
            \let\p@enumiv\@empty
            \itemsep=0pt
            \parsep=0pt
            \leftmargin=\parindent
            \itemindent=-\parindent
            \renewcommand\theenumiv{\@arabic\c@enumiv}}%
      \sloppy
      \clubpenalty4000
      \@clubpenalty \clubpenalty
      \widowpenalty4000%
      \sfcode`\.\@m}
     {\def\@noitemerr
       {\@latex@warning{Empty `thebibliography' environment}}%
      \endlist}
\newtheorem{definition}{ Definition}
\newtheorem{lem}{ Lemma}
\newtheorem{thm}{ Theorem}
\newtheorem{expl}{ Example}
\begin{document}





	




\title{\Large{\bf Role of New Kernel Function in Complexity Analysis of an Interior Point Algorithm for Semi definite Linear Complementarity Problem}}
\author{Nabila Abdessemed, Rachid Benacer and  Naima Boudiaf
\thanks{Nabila Abdessemed (n.abdessemed@univ-Batna2.dz), Department of Mathematics, Mostefa Ben Boula\"{i}d University (Batna2), Rachid Benacer (r.benacer@univ-batna2.dz), LTM laboratory, Department of Mathematics, Mostefa Ben Boula\"{i}d University (Batna2), and Naima Boudiaf (n.boudiaf@univ-batna2.dz), EDPA Laboratory, Department of Mathematics, Mostefa Ben Boula\"{i}d University (Batna2), Batna, Algeria.}}
\maketitle

{\bf Abstract}
{\emph{
 In this paper, we introduce a new kernel function which differs from previous functions, and play an important role for generating  a new design of primal-dual interior point algorithms for semidefinite linear complementarity problem. Its properties, allow us a great simplicity for the  analysis of interior-point method, therefore the complexity of large-update primal-dual interior point is the best so far. Numerical tests have shown that the use of this function gave a big improvement in the results concerning the time and the number of iterations. so is  well promising and perform well enough in practice in comparison with some other existing results in the literature. 
}}
\begin{flushleft}
{\bf Keywords: }
 Semidefinite linear complementarity problems, interior point methods, primal-dual Newton method, polynomial complexity, Kernel function.
\end{flushleft}
2000 Mathematics Subject Classification. 90C30, 90C33.
\section{Introduction}
Let $ S^{n} $ denotes the linear space of all $ n \times n $ real symmetric matrices, $ S_{+}^{n} $  and
$ S_{++}^{n} $ is the cone of symmetric positive semidefinite, and symmetric positive definite matrices respectively.
  The semidefinite linear complementarity problem (SDLCP) \\
\textit{ Find a pair of matrices }$  (X, Y) \in \times{S}^{n}\times{S}^{n}$  \textit {that satisfies the following conditions } \\
\begin{equation}
\label{equation: 1}
\begin{array}{lll}
 X,  Y \in S_{+}^{n},    Y= L(X)+Q,     \textit{   and   }    X \bullet Y  = Tr (XY)=0.           
\end{array}
\end{equation}
Where $ L: S^{n} \rightarrow S^{n} $ is a given linear transformation and $ X, $   $  Y, $  $  Q \in S^{n} $. 

 Interior point methods (IPMS) have been known for several decades, Since the invention of interior point methods by Karmarker, In 1984 \cite{karmarker}, whith an important contribution was made by \textbf{Nestrov} and \textbf{Todd} \cite{todd}. This methods considered the powerful tools to solve linear optimization (LO) and can be extended to more general cases such as complementarity problem (CP ), semidefinite optimization(SDO) and  semidefinite linear complementarity problem (SDLCP).\\ 
 The semidefinite linear complementarity problem (SDLCP) can be also viewed as a generalization of the standard linear complementarity problem (LCP) and included the geometric monotone semidefinite linear complementerity introduced by  \cite{kojima}, so it became the object of many studies of research these last years and have important applications in mathematical programming and various areas of engineering and scientific fields (see \cite{peyghami2},\cite{roos}). \\
 Because their polynomial complexity and their simulation efficiency, primal-dual following path are the most attractive methods among interior point to solve a large wide of optimization problems \cite{peyghami1}, \cite{peng1}, \cite{wang}, \cite{wright}. These methods are based on the kernel functions for determining new search directions and new proximity functions for analyzing the complexity of these algorithms, thus we have shown the important role of the kernel function in generating a new design of primal-dual interior point algorithm.\\ Also these methods are introduced by Bai et al \cite{bai}, and Elghami \cite{elghami 1} for (LO) and (SDO) and extended by many authors for different problems in mathematical programming \cite{achache}, \cite{boudiaf}, \cite{ elghami 2}, \cite{kheirfam }, \cite{krislock}.\\
The polynomial complexity of large update primal-dual algorithms is improved in contrast with the classical complexity given by logarithmic barrier functions by using this new form.\\
A kernel function is an univariate strictly convex function which is defined for all positive real $ t $ and is minimal at $  t=1$, whereas the minimal value equals $ 0 $. In the other words $ \psi (t) $ is a kernel function when it is twice differentiable and satisfies the following conditions
  \begin{equation*}
   \psi (1) =\psi'(V)= 0,  \psi''(t) > 0   \   for \ all \  t>0    \\\    and \
 \lim_{t \rightarrow 0} \psi(t)= \lim_{t \rightarrow +\infty} \psi(t)= +\infty. 
\end{equation*}
We can describe by its second derivative, as follows 
\begin{equation}
\label{equation: 23}
\psi(t)= \int_1^t \int_1^\zeta \psi''(\xi) d\xi d\zeta.
\end{equation}
This function may be extended to a scaled barrier function $ \Psi $ defined from $ S^{n} $ to $ S^{n} $ by $ \Psi(V)=Tr(\psi(V)) $ where $ V $ is a symmetric positive definite matrix.\\
In this paper, we establish the polynomial complexity for (SDLCP) by introducing the following new kernel function
\begin{equation}
\label{equation: 2}
\psi (t) = \frac{1}{2} (2t^{2}+ \frac{1}{t^{2}} -5)+ e^{\frac{1}{t}-1}.
\end{equation}
\\
The goal of this paper is to investigate such a new kernel function and the corresponding barrier function and show that our large-update primal-dual algorithm has favourable complexity bound in terms of elegant analytic properties of this kernel function. We show that the (SDLCP) is generalization of (SDO), we loose the orthogonality of the search direction matrices. Therefore, the analysis of search direction and step size is a little different from (SDO) case, this will be studied in detail later.\\
The paper is organized as follows. First in Sec. 2, we present the generic primal-dual algorithm, based on Nestrov-Todd direction. and the new kernel function and its growth properties for (SDLCP) are presented in Sect. 3, in Sect. 4, we derive the complexity results for the algorithm (an estimation of the step size and its default value, the worst case iteration complexity). In Sec. 5, some numerical results are provided.  Finally, a conclusion in Sec. 6.\\
Throughout the paper we use the following notation and we review some known facts about matrices and matrix functions which will be used in the analysis of the algorithm.\\
The expression $ X \succeq 0 $ ( $ X \succ 0 $) means that  $ X \in S_{+}^{n} $ ( $ X \in S_{++}^{n}  $).
The trace of $ n\times n $ matrix $ X $ is denoted by $  Tr(X) = \sum \limits_{i=1}^{n} x_{ii}.$
 The Frobenius norm of a matrix $ X \in \mathbb{R}^{n\times n}$ is defined by $  \Vert X \Vert_{F} = \sqrt{X \bullet X} = \sqrt{Tr(X^{T}X)}. $ 
 For any $ X \succ0  $, $ \lambda_{i}(X) $, $ 1 \leq i \leq n $, denote its eigenvalues.  
  $ Q^{1/2} $ denotes the symmetric square root, for any $ Q \in S_{++}^{n} $, .
The identity matrix of order $ n $ is denoted by $ I $. The diagonal  with the vector $x$ is given by $ X = diag(x). $ 
we denote by $ \lambda(V) $ the vector of eigenvalues of $ V \in S_{++}^{n} $,  arranged in
non-increasing order, that is $ \lambda_{1}(V) \geq \lambda_{2}(V) \geq \ldots \geq \lambda_{n}(V). $
For two real valued functions $ f(x), g(x): \mathbb{R}_{+}^{n} \rightarrow \mathbb{R}_{++}^{n} $, $ f(x)= O (g(x)) $ if $ f(x) \leq k g(x)$ and $  f(x)= \Theta (g(x)) $  if  $k_{1}g(x) \leq f(x) \leq k_{2} g(x) $ for some positive constants $  k $, $ k_{1} $ and $ k_{2} $.
\begin{thm}
(Spectral theorem for symmetric matrices \cite{bai})
The real $ n \times n$ martix $ A $ is symmetric if and only if there exists a matrix $ Q \in \mathbb{R}^{n \times n} $ such that $ Q^{T}Q = I $ and $ Q^{T}AQ = B $, where $ I $ is the $ n \times n $ identity matrix and $ B $ is a diagonal matrix.
\end{thm}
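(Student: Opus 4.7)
The plan is to prove the two directions of the equivalence separately. The easy direction is the ``if'' part: assuming $Q^{T}Q=I$ and $Q^{T}AQ=B$ with $B$ diagonal, I simply write $A=QBQ^{T}$ and compute $A^{T}=(QBQ^{T})^{T}=QB^{T}Q^{T}=QBQ^{T}=A$, since every diagonal matrix is symmetric. This takes only one line.

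The harder direction is the ``only if'' part, which I would prove by induction on $n$. The base case $n=1$ is trivial, since every $1\times 1$ matrix is already diagonal and we take $Q=[1]$. For the inductive step, I first need a preliminary lemma: every real symmetric matrix $A$ has a real eigenvalue with a real unit eigenvector. For this I would argue that the characteristic polynomial $\det(A-\lambda I)$ has a (possibly complex) root $\lambda$ with associated nonzero complex eigenvector $v\in\mathbb{C}^{n}$, and then show $\lambda\in\mathbb{R}$ by computing $\lambda \,v^{*}v=v^{*}Av=(Av)^{*}v=\overline{\lambda}\,v^{*}v$, using $A^{T}=A$ and $A$ real, so $\lambda=\overline{\lambda}$. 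Since $\lambda$ is real and $A-\lambda I$ is a real singular matrix, it has a real nonzero kernel vector, which I normalize to obtain a unit real eigenvector $q_{1}$.

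With such a $q_{1}$ in hand, I extend it to a real orthonormal basis $\{q_{1},u_{2},\dots,u_{n}\}$ of $\mathbb{R}^{n}$ by Gram--Schmidt, and form the orthogonal matrix $Q_{1}=[q_{1}\mid U]$ where $U=[u_{2}\mid\cdots\mid u_{n}]$. A direct block computation using $Aq_{1}=\lambda_{1}q_{1}$ and $U^{T}q_{1}=0$ shows that
\begin{equation*}
Q_{1}^{T}AQ_{1}=\begin{pmatrix} \lambda_{1} & w^{T}\\ 0 & A_{1}\end{pmatrix},
\end{equation*}
and the symmetry of $Q_{1}^{T}AQ_{1}$ (inherited from $A$) forces $w=0$ and $A_{1}\in S^{n-1}$. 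The induction hypothesis then produces an orthogonal $Q_{2}\in\mathbb{R}^{(n-1)\times(n-1)}$ diagonalising $A_{1}$. Assembling $Q:=Q_{1}\,\mathrm{diag}(1,Q_{2})$ yields an orthogonal matrix with $Q^{T}AQ$ diagonal, completing the induction.

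The main obstacle will be the lemma that symmetric matrices have real eigenvalues and real eigenvectors, because everything else follows from linear-algebra bookkeeping; in particular, one must be careful to pass from a complex eigenvector (guaranteed by the fundamental theorem of algebra) to a real one, which works only because the eigenvalue itself turns out to be real. Since the paper cites \cite{bai} for this theorem, I would likely keep the proof sketch brief and invoke the standard references, but the structure above gives a self-contained argument.
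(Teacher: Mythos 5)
Your proof is correct, but there is nothing in the paper to compare it against: the paper states this spectral theorem as a known fact imported from the reference [2] (Bai, El Ghami, Roos) and supplies no proof at all, since it is only used there to make the matrix function $\psi(V)$ in Definition~1 well defined. Your argument is the standard textbook one --- the one-line ``if'' direction via $A=QBQ^{T}$ (which uses that $Q^{T}Q=I$ for a square $Q$ forces $Q^{-1}=Q^{T}$), the reality of eigenvalues via $\lambda v^{*}v=v^{*}Av=\overline{\lambda}v^{*}v$, and deflation by induction on $n$ --- and all the steps check out, including the passage from a complex eigenvector to a real one once $\lambda$ is known to be real, and the observation that symmetry of $Q_{1}^{T}AQ_{1}$ kills the off-diagonal block $w$. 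So the proposal is a complete, self-contained proof of a statement the paper deliberately leaves to the literature; no gap.
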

\begin{definition}(\cite{elghami 1}, Definition 3.2.1)
Let $V$ be a symetric matrix, and let
\begin{equation}
\label{equation: 3}
V = Q^{T} diag(\lambda_{1} (V), \lambda2 (V), \ldots , \lambda_{n} (V)) Q
\end{equation}
where $Q$ is any orthogonal matrix that diagonalizes $ V $, and let $ \psi(t) $ be defined as in Eq. (\ref{equation: 2}). The matrix valued function $ \psi: S^{n} \rightarrow S^{n}$ is defined
\begin{equation}
\label{equation: 4}
\psi(V) =  Q^{T} diag(\psi( \lambda_{1} (V)  ), \psi( \lambda_{2} (V)  ), \ldots , \psi( \lambda_{n} (V)  )) Q.
\end{equation}
\end{definition}
Note that $ \psi(V) $ depends only on the restriction of $ \psi(t) $ to the set of eigenvalues of V. Since $ \psi(t) $ is differentiable, the derivative $ \psi'(t) $ is well defined for $ t > 0 $. Hence, replacing $ \psi( \lambda_{i} (V)) $ in Eq. (\ref{equation: 4}) by $ \psi'( \lambda_{i} (V)) $, we obtain that the matrix function $ \psi'(V) $ is defined as well. Using $ \psi $, we define the barrier function (or proximity function) $ \Psi(V): S_{+}^{n} \rightarrow \mathbb{R}_{+} $ as follows
\begin{equation}
\label{equation: 5}
	\Psi(V) = Tr(\psi(V)) = \sum \limits_{i=1}^{n} \psi(\lambda_{i}(V)).
\end{equation}
When we use the function $ \psi(.) $ and its first three derivatives $ \psi'(.) $, $ \psi''(.) $ and $ \psi'''(.) $
without any specification, it denotes a matrix function if the argument is a matrix and a univariate function (from $ \mathbb{R} $ to $ \mathbb{R} $ ) if the argument is in $ \mathbb{R} $.\\
In \cite{elghami 2},\cite{horn}, we can be found some concepts related to matrix functions.
\section{Presentation of Problem}
The feasible set, the strict feasible set and the solution set of (\ref{equation: 1})
are subsets of $ \mathbb{R}^{n \times n} $ denoted respectively by 
\begin{equation*}
\begin{array}{ccl}
 \mathcal{F}&=& \lbrace (X, Y) \in S^{n}  \times  S^{n}, Y- L(X) = Q: X \succeq 0, Y \succeq 0  \rbrace,\\
\mathcal{F}^{0}&=& \lbrace (X, Y) \in \mathcal{F}: X \succ 0, Y \succ 0  \rbrace,\\
\mathcal{S}&=& \lbrace (X, Y) \in \mathcal{F}: Tr(X Y) = 0  \rbrace.
\end{array}
\end{equation*}
The set $ \mathcal{S} $ is nonempty and compact, if $ \mathcal{F}^{0} $ is not empty and $ L $ is monotone.
As we know, the basic idea of primal-dual IPMs is to relax the third equation (complementarity condition) in problem (\ref{equation: 1}) with the following parameterized system
\begin{equation}
\label{equation: 13}
\left\{
\begin{array}{lll}
XY= \mu I, \\
Y- L(X)=Q,\\
X\succ0, Y\succ0.
\end{array}
\right.
\end{equation}
where $ \mu > 0  $, and I is the identity matrix.\\
Since $ L $ is a linear monotone transformation and (SDLCP) is strictly feasible (i.e., there exists $ (X_{0}, Y_{0}) \in \mathcal{F}^{0}$), the System (\ref{equation: 13}) has a unique solution for any $ \mu > 0 $. As $ \mu \rightarrow 0 $ the sequence $ (X(\mu), Y(\mu)) $ approaches the solution $ (X,Y) $ of problem (SDLCP).\\
Suppose that the point $  (X,Y)  $ is strictly feasible. The natural way to define a search direction is to follow the Newton approach to linearize the first equation in System (\ref{equation: 13}) by replacing $ X $ and $  Y$ with $ X_{+} = X + \Delta X $ and $ Y_{+}
= Y + \Delta Y $, respectively. This leads to the following system
\begin{equation}
\label{equation: 14}
\left\{
\begin{array}{lll}
X \Delta Y + \Delta XY  = \mu I- XY, \\
 L(\Delta X)=\Delta Y.
\end{array}
\right.
\end{equation}
Or equivalently
\begin{equation}
\label{equation: 15}
\left\{
\begin{array}{lll}
\Delta X+X\Delta YY^{-1}  = \mu Y^{-1}- X, \\
 L(\Delta X)=\Delta Y.
\end{array}
\right.
\end{equation}
 In general case, the Newton system has a unique solution not necessarily symmetric, because $ \Delta X $ is not symmetric due to the matrix $  X\Delta YY^{-1}$. Many researchers have proposed methods for symmetrizing the first equation in System (\ref{equation: 15} ) by using an invertible matrix $ P $ and the term $  X\Delta YY^{-1}$ is replaced by $ P\Delta YP^{T} $. Thus, we obtain
 \begin{equation}
\label{equation: 16}
\left\{
\begin{array}{lll}
\Delta X+P\Delta YP^{T}  = \mu Y^{-1}- X, \\
 L(\Delta X)=\Delta Y.
\end{array}
\right.
\end{equation}
 
In \cite{todd}, Todd studied several symmetrization schema. Among them, we consider the Nesterov-Todd(NT) symmetrization schema where  $  P$ is defined as 
\begin{equation*} 
\begin{array}{ccl}
P &=& X^{^{\frac{1}{2}}}(X^{\frac{1}{2}} Y X^{\frac{1}{2}})^{-\frac{1}{2}}  X^{\frac{1}{2}}\\
& =& Y^{-\frac{1}{2}} (Y^{\frac{1}{2}} XY^{\frac{1}{2}} )^{\frac{1}{2}} Y^{-\frac{1}{2}},
\end{array}
\end{equation*}
Let $ D = P^{\frac{1}{2}} $ where  $ P^{\frac{1}{2}} $ denotes the symmetric square root of $ P. $  \\
The matrix  $ D $  can be used to scale  $ X$  and  $ Y$  to the same matrix $ V $ defined by
 \begin {equation}
\label{equation: 17}
V= \frac{1}{\sqrt{\mu}}D^{-1}XD^{-1}= \frac{1}{\sqrt{\mu}}DYD,
\end{equation}
thus we have
\begin{equation}
\label{equation: 18}
V^{2}= \frac{1}{\mu}D^{-1}XYD.
\end{equation}
Note that the matrix $  V  $ and  $D $ are symmetric and positive definite. Let us further define
\begin{equation}
\label{equation: 19}
\begin{array}{ccl}
D_{X}&=& \frac{1}{\sqrt{\mu}}D^{-1} \Delta X D^{-1},\\
D_{Y}&=&\frac{1}{\sqrt{\mu}}D \Delta Y D.
\end{array}
\end{equation}
So by using Eqs.(\ref{equation: 17}) and (\ref{equation: 19}), the System (\ref{equation: 16}) becomes
\begin{equation}
\label{equation: 20}
\left\{
\begin{array}{lll}
 D_{X}+D_{Y} =D_{V},\\
\tilde{L}(D_{X})=D_{Y}.
\end{array}
\right.
\end{equation}
Where $  \tilde{L}(D_{X})= D L(D D_{X} D )D $    and      $ D_{V}=V^{^{-1}}-V.  $\\

The linear transformation $ \tilde{L} $ is also monotone on $S^{n}$. Under our hypothesis the new linear System (\ref{equation: 20}) has a unique symmetric solution $ (D_{X},D_{Y}) $. These directions are not orthogonal, because
\[ 
\begin{array}{ccl}
D_{X} \bullet D_{Y}&=&Tr(D_{Y}D_{X})\\
 &=&Tr(D_{X}D_{Y})\\
&=&(\frac{1}{\sqrt{\mu}}D^{-1} \Delta X D^{-1})\bullet (\frac{1}{\sqrt{\mu}}D \Delta Y D)\\
&=& \frac{1}{\mu}\Delta X  \bullet \Delta Y \\
&=& \frac{1}{\mu}\Delta X  \bullet L(\Delta X )\geq 0.
\end{array}
\] 
thus, it is only difference between SDO and SDLCP problem.\\
So far, we have described the schema that defines classical NT-direction. Now following \cite{elghami 1, peyghami2, peyghami1} and \cite{wang}, we replace the right hand side of the first equation in System(\ref{equation: 20}) by $ -\psi'(V) $. Thus we will use the following system to define new search directions 
\begin{equation}
\label{equation: 21}
\left\{
\begin{array}{lll}
 D_{X}+D_{Y} = -\psi'(V),\\
\tilde{L}(D_{X})=D_{Y}.
\end{array}
\right.
\end{equation}
The new search directions $ D_{X} $ and $ D_{Y} $ are obtained by solving System (\ref{equation: 21}), so that $ \Delta{X} $ and $ \Delta{Y} $ are computed via Eq.(\ref{equation: 19}). By taking along the search directions with a step size $ \alpha $ defined by some line search rules, we can construct a new couple $ (X_{+},Y_{+}) $ according to $ X_{+} = X + \alpha \Delta X $ and $ Y_{+} = Y + \alpha \Delta Y $.
\begin{algorithm} 
\renewcommand{\thealgorithm }{}
\caption{Algorithm 1  Generic interior point algorithm for SDLCP}
\textbf{Input:} A threshold parameter $ \tau \geq 1 $;
\ \  \  \   \  \   \  \  an accuracy parameter $ \epsilon \geq 0 $;
\ \  \  \   \  \   \  \ barrier update parameter $ \theta $, $0 < \theta< 1$;
\ \  \  \   \  \   \  \ $ X^{0} \succ0$, $ Y^{0}\succ0 $ and $ \mu^{0}=Tr(X^{0}Y^{0})/n $  such that  $ \Psi(X^{0}, Y^{0}, \mu^{0})\leq \tau $\\
\textbf{begin}\\
\ \  \  \   \  \   \  \ $ X := X^{0}; Y := Y^{0}; \mu := \mu^{0}; $\\
\  \  \ \   \textbf{while} $ n \mu \geq \epsilon $ \textbf{do}\\
  \  \ \   \  \ \ \textbf{begin}\\
  \  \ \   \  \ \ \  \ \   \  \ \ $ \mu= (1-\theta)\mu ;$\\
  \  \  \ \  \  \  \ \  \  \  \ \   \textbf{while} $ \Psi(X, Y, \mu)>\tau $ \textbf{do}\\
 \  \ \   \  \ \  \  \  \ \  \  \  \ \  \textbf{begin}\\
    \  \ \   \  \ \  \  \ \   \  \ \ \  \  \  \ \  Solve System (\ref{equation: 21} ) and use Eq. (\ref{equation: 19}) to obtain $ (\Delta X, \Delta Y) ;$\\
      \  \ \   \  \ \  \  \ \   \  \ \  \  \  \ \  determine a suitable step size $ \alpha $;\\
\  \ \   \  \ \  \  \ \   \  \ \  \  \  \ \        update $ (X, Y) := (X, Y) + \alpha(\Delta X, \Delta Y) $\\
\  \ \   \  \ \  \  \ \   \  \ \  \  \  \ \ \textbf{end}\\
\  \ \   \  \ \  \  \ \    \textbf{end}\\

\ \   \  \ \   \textbf{end} 
\caption{1  Generic interior point algorithm for SDLCP}
\end{algorithm}
\newpage
	\section{Properties of New Kernel Function}
In this part, we present the new kernel function and we give those properties that are crucial in our complexity analysis.\\
Let
\begin{equation}
\label{equation: 22}
\psi (t) = \frac{1}{2} (2t^{2}+ \frac{1}{t^{2}} -5)+ e^{\frac{1}{t}-1}.
\end{equation}
 We list the first three derivatives of $ \psi $ as below
\begin{equation}
\label{equation: 24}
\psi'(t)= 2t- \frac{1}{t^{3}}-\frac{1}{t^{2}}e^{\frac{1}{t}-1},
\end{equation}
\begin{equation}
\label{equation: 25}
\psi''(t)= 2+ \frac{3}{t^{4}}+(\frac{2}{t^{3}}+ \frac{1}{t^{4}})e^{\frac{1}{t}-1} > 1,
\end{equation}
\begin{equation}
\label{equation: 26}
\psi'''(t)= - \frac{12}{t^{5}} - (\frac{6}{t^{5}}+\frac{1}{t^{6}}+ \frac{6}{t^{4}})e^{\frac{1}{t}-1} <0.
\end{equation}
\begin{lem}
Let $ \psi(t)$ be as defined in Eq.(\ref{equation: 22}). Then
\begin{equation}
\label{equation: 27}
t \psi''(t)+\psi'(t)>0, t<1, 
\end{equation}
\begin{equation}
\label{equation: 28}
 \psi'''(t)<0,  t>0, 
\end{equation}
\begin{equation}
\label{equation: 29}
t \psi''(t)-\psi'(t)>0, t>1, 
\end{equation}
\begin{equation}
\label{equation: 30}
2\psi''(t)^{2}-\psi'''(t)\psi'(t)>0, t<1, 
\end{equation}
\begin{equation}
\label{equation: 31}
\psi''(t)\psi'(\beta t)- \beta \psi'(t)\psi''(\beta t)>0, t>1, \beta > 1. 
\end{equation}
\end{lem}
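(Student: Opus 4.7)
My overall approach is to handle (27), (28), (29) by direct substitution from the explicit formulas (\ref{equation: 24})--(\ref{equation: 26}), and to reduce (31) to the earlier parts via a monotonicity argument. The only genuinely delicate inequality is (30), where I would need an honest computation.

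\emph{Easy parts.} For (28), observe that every summand of $\psi'''(t)$ in (\ref{equation: 26}) is strictly negative for $t>0$, since $e^{1/t-1}>0$; this gives $\psi'''(t)<0$ directly. For (27) and (29), I would just add/subtract the expressions (\ref{equation: 24}) and $t\cdot$(\ref{equation: 25}). Both the $\pm 2t$ term and the $\pm\frac{1}{t^{2}}e^{1/t-1}$ term combine so that the resulting expressions
$$t\psi''(t)+\psi'(t)=4t+\frac{2}{t^{3}}+\Bigl(\frac{1}{t^{2}}+\frac{1}{t^{3}}\Bigr)e^{1/t-1},\qquad t\psi''(t)-\psi'(t)=\frac{4}{t^{3}}+\Bigl(\frac{3}{t^{2}}+\frac{1}{t^{3}}\Bigr)e^{1/t-1}$$
are sums of manifestly positive terms for every $t>0$, which proves (27) and (29) (in fact on all of $t>0$, not just on the stated subintervals).

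\emph{Part (30).} First I would record the sign of $\psi'(t)$ for $t<1$: since $\psi''>0$ everywhere by (\ref{equation: 25}) and $\psi'(1)=0$, the function $\psi'$ is strictly increasing, so $\psi'(t)<0$ for $t<1$. Combined with (28), this gives $\psi'''(t)\psi'(t)>0$ on $(0,1)$, so the inequality $2\psi''(t)^{2}>\psi'''(t)\psi'(t)$ is nontrivial. The plan is to substitute the explicit forms (\ref{equation: 24})--(\ref{equation: 26}), introduce the abbreviation $u:=e^{1/t-1}\ge 1$ and $s:=1/t>1$, and write $t^{12}\bigl[2\psi''(t)^{2}-\psi'''(t)\psi'(t)\bigr]$ as a polynomial in $s$ and $u$. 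The leading barrier contribution is $2\bigl(s^{4}u\bigr)^{2}-\bigl(-s^{6}u\bigr)\bigl(-s^{2}u\bigr)=s^{8}u^{2}>0$, and the ``growth" contribution $2\cdot 4=8$ on the $s$-free part is likewise positive; the hard step will be to check that all cross terms between the growth part $2$ and the barrier part of $\psi''$, and between $-12/t^{5}$ and the $u$-part of $\psi'$, have the right sign, or else are dominated by one of the two squares above. I expect the cleanest presentation to group terms by the power of $u$ (namely $u^{0}$, $u^{1}$, $u^{2}$) and verify nonnegativity within each group. This is the step I anticipate as the main obstacle: it is conceptually routine but computationally the most involved.

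\emph{Part (31).} I would reduce this to (28) and (29). Fix $t>1$ and set
$$F(\beta):=\psi''(t)\psi'(\beta t)-\beta\,\psi'(t)\psi''(\beta t),\qquad \beta\ge 1.$$
Then $F(1)=0$, and
$$F'(\beta)=\psi''(\beta t)\bigl[t\psi''(t)-\psi'(t)\bigr]-\beta t\,\psi'(t)\psi'''(\beta t).$$
Since $t>1$, (29) gives $t\psi''(t)-\psi'(t)>0$, and $\psi''(\beta t)>0$, so the first summand is positive. Since $\psi'(1)=0$ and $\psi''>0$, we have $\psi'(t)>0$; by (28) $\psi'''(\beta t)<0$; therefore $-\beta t\,\psi'(t)\psi'''(\beta t)>0$ as well. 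Hence $F'(\beta)>0$ for all $\beta\ge 1$, and integrating from $1$ to $\beta$ yields $F(\beta)>F(1)=0$, which is (31).
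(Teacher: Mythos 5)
Your approach coincides with the paper's on every part: (27)--(29) are exactly the same direct computations (and you are right that the resulting expressions are positive on all of $t>0$, not only on the stated subintervals); for (31) you spell out the standard monotonicity-in-$\beta$ argument that the paper merely invokes by citing (28) and (29), and your computation of $F'(\beta)$ and the signs of its two summands is correct. The only place where you stop short of a proof is (30): you describe the right plan (expand and group by powers of $u=e^{1/t-1}$) but do not execute it. When you do, you will find
\[
2\psi''(t)^{2}-\psi'''(t)\psi'(t)=K(t)+Q(t)\,e^{1/t-1}+H(t)\,e^{2(1/t-1)},
\]
with $K(t)=8+\tfrac{48}{t^{4}}+\tfrac{6}{t^{8}}$ and $H(t)=\tfrac{2}{t^{6}}+\tfrac{2}{t^{7}}+\tfrac{1}{t^{8}}$ both manifestly positive, but
\[
Q(t)=\tfrac{28}{t^{3}}+\tfrac{20}{t^{4}}+\tfrac{2}{t^{5}}+\tfrac{6}{t^{7}}+\tfrac{6}{t^{8}}-\tfrac{1}{t^{9}},
\]
which contains the negative term $-\tfrac{1}{t^{9}}$; this is the one genuine subtlety your sketch anticipates but does not resolve (and which the paper also passes over silently). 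It is handled by the elementary bound $e^{1/t-1}\ge \tfrac{1}{t}$, which gives $\tfrac{1}{t^{8}}e^{2(1/t-1)}\ge \tfrac{1}{t^{9}}e^{1/t-1}$, so the offending term is absorbed by part of $H(t)e^{2(1/t-1)}$. With that one line added, your proof is complete and matches the paper's.
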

\begin{proof}
For the first inequality, using Eqs.(\ref{equation: 24}) and (\ref{equation: 25}), it follows that 
\[
\begin{array}{ccl}
t\psi''(t)+ \psi'(t)&=& 4t+ \frac{2}{t^{3}}+(\frac{1}{t^{2}}+ \frac{1}{t^{3}})e^{\frac{1}{t}-1}>0    \textit{ if } t<1.
\end{array} 
\] 
The second inequality $ \psi'''(t)<0$, for all  $ t>0 $ it is clear from Eq.(\ref{equation: 26}).
\[
\begin{array}{ccl}
t\psi''(t)- \psi'(t) = \frac{4}{t^{3}}+(\frac{1}{t^{3}}+\frac{3}{t^{2}})e^{\frac{1}{t}-1}>0  \textit{ if } t>1.\\
2\psi''(t)^{2}-\psi'''(t)\psi'(t) = K(t)+H(t)e^{2(\frac{1}{t}-1)}+Q(t)e^{(\frac{1}{t}-1)}>0 \textit{ if } t<1. \\
\end{array} 
\] 
\[
\begin{array}{ccl}
Where, K(t)=8+\frac{48}{t^{4}}+\frac{6}{t^{8}},\ H(t)=(\frac{2}{t^{6}}+\frac{2}{t^{7}}+ \frac{1}{t^{8}}),\ and \\
Q(t)=(\frac{28}{t^{3}}+\frac{20}{t^{4}}+\frac{2}{t^{5}}+\frac{6}{t^{7}}+\frac{6}{t^{8}}-\frac{1}{t^{9}}) \\
\end{array} 
\] 
Finally, to obtain Eq. (\ref{equation: 31}), using Eqs. (\ref{equation: 28})  and  (\ref{equation: 29}).
\end{proof}
Now let as define the proximity measure $ \delta(V) $ as follows
\begin{equation}
\label{equation: 32}
\begin{array}{ccl}
\delta(V)&=&\frac{1}{2} \Vert  -\psi'(V) \Vert\\
&=& \frac{1}{2} \sqrt{Tr(\psi'(V)^{2}})\\
&=& \frac{1}{2} \Vert  D_{X} +D_{Y} \Vert.
\end{array}  
\end{equation}
 Note that $ \delta(V) = 0    \Leftrightarrow    V = I  \Leftrightarrow   \Psi(V) = 0 .$
\begin{thm} \cite{peng1}
\label{thm4}
Suppose that $ V_{1} $ and $ V_{2} $ are symmetric positive definite and $ \Psi $ is the real valued matrix function induced by the matrix function $ \psi. $ Then, 
\[
\Psi\left( \left[( V_{1}^\frac{1}{2}V_{2}V_{1}^\frac{1}{2})^\frac{1}{2} \right] \right) \leq \frac{1}{2} ( \Psi(V_{1}) +\Psi (V_{2})).
\]
\end{thm}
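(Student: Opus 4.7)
The plan is to exploit the spectral (i.e.\ unitarily invariant) nature of $\Psi$ and reduce the statement to an inequality between the eigenvalues of $W:=(V_1^{1/2}V_2V_1^{1/2})^{1/2}$ and those of $V_1,V_2$ individually. By Eq.~(\ref{equation: 5}), $\Psi(A)=\sum_{i=1}^n \psi(\lambda_i(A))$, so the claim to prove is
\[
\sum_{i=1}^n \psi(\lambda_i(W)) \;\le\; \tfrac12\sum_{i=1}^n \psi(\lambda_i(V_1)) + \tfrac12\sum_{i=1}^n \psi(\lambda_i(V_2)).
\]

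Next, I would establish the key log-majorization of the eigenvalues of $W$. Because $W^2=V_1^{1/2}V_2V_1^{1/2}$ is similar to $V_1V_2$, one has $\lambda_i(W)^2=\lambda_i(V_1V_2)$. Applying the classical multiplicative Horn/Lidskii-Wielandt inequality to the product of positive definite matrices, with eigenvalues listed in non-increasing order, yields
\[
\prod_{i=1}^k \lambda_i(W) \;\le\; \prod_{i=1}^k \sqrt{\lambda_i(V_1)\lambda_i(V_2)}, \qquad k=1,\ldots,n,
\]
with equality when $k=n$ by the determinant identity $\det W=\sqrt{\det V_1\det V_2}$. This is precisely a log-majorization of $\lambda(W)$ by $\sqrt{\lambda(V_1)\lambda(V_2)}$.

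The third and decisive step is to turn this log-majorization into the desired additive bound. Taking logarithms, I obtain a weak majorization in $\log$-variables and then combine it with the convexity of $\psi$ (and in particular $t\psi''(t)+\psi'(t)>0$, verified in Lemma 1) to conclude via a Hardy-Littlewood-Polya/Schur-convexity argument that
\[
\sum_{i=1}^n \psi(\lambda_i(W)) \;\le\; \sum_{i=1}^n \psi\!\left(\sqrt{\lambda_i(V_1)\lambda_i(V_2)}\right).
\]
Finally, the AM-GM inequality $\sqrt{ab}\le (a+b)/2$ combined with the convexity of $\psi$ yields the termwise comparison $\psi(\sqrt{ab})\le \tfrac12(\psi(a)+\psi(b))$, and summing over $i=1,\ldots,n$ completes the argument.

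The main obstacle is the middle step: upgrading the purely multiplicative/log-majorization information on eigenvalues into an additive trace inequality. This requires that $t\mapsto \psi(e^t)$ be convex (and, in the relevant range, monotone in the right sense), a fact which does not follow from ordinary convexity of $\psi$ alone and must be deduced from the analytic properties of the specific kernel (\ref{equation: 22}) collected in Lemma 1. Once that geometric-convexity-type property is in hand, the rest of the proof is essentially bookkeeping.
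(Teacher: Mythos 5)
First, a point of comparison: the paper does not prove this theorem at all --- it is quoted from \cite{peng1} and used as a black box. So your argument is not competing with an in-paper proof; it is essentially a reconstruction of the proof in the cited source, and the route you take (log-majorization of the eigenvalues plus Schur-convexity of $x\mapsto\sum_i\psi(e^{x_i})$) is the standard one. The skeleton is sound: $W^2$ is similar to $V_1V_2$, the Weyl--Horn inequalities give $\prod_{i=1}^k\lambda_i(W)\le\prod_{i=1}^k\sqrt{\lambda_i(V_1)\lambda_i(V_2)}$ with equality at $k=n$ (determinants), hence $\log\lambda(W)$ is \emph{fully} majorized --- not merely weakly majorized --- by $\tfrac12(\log\lambda(V_1)+\log\lambda(V_2))$. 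Because the total sums agree, Hardy--Littlewood--P\'olya applies with convexity of $g(t)=\psi(e^t)$ alone; no monotonicity of $g$ is needed, so the hedge in your closing paragraph can be dropped. Convexity of $g$ amounts to $s\psi''(s)+\psi'(s)\ge 0$ for all $s>0$: Eq.\ (\ref{equation: 27}) gives this for $s<1$, and for $s\ge 1$ it is immediate since $\psi'(s)\ge 0$ there; this is exactly the exponential convexity recorded as item 1 of Lemma \ref{lem2}.

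The one genuine misstep is your final sentence: AM--GM combined with ordinary convexity of $\psi$ does \emph{not} yield $\psi(\sqrt{ab})\le\tfrac12(\psi(a)+\psi(b))$. Convexity gives $\psi(\tfrac{a+b}{2})\le\tfrac12(\psi(a)+\psi(b))$, and passing from $\sqrt{ab}\le\tfrac{a+b}{2}$ to $\psi(\sqrt{ab})\le\psi(\tfrac{a+b}{2})$ would require $\psi$ to be increasing, which fails on $(0,1)$ (indeed $\psi'(t)<0$ there). The correct justification is the same exponential convexity you already invoked for the majorization step: $g\bigl(\tfrac12(\log a+\log b)\bigr)\le\tfrac12\bigl(g(\log a)+g(\log b)\bigr)$, i.e.\ $\psi(\sqrt{ab})\le\tfrac12(\psi(a)+\psi(b))$ directly. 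With that substitution (and noting that this last step can be folded into the Schur-convexity step, since convexity of $g$ already bounds $\sum_i g(\tfrac12(\log\lambda_i(V_1)+\log\lambda_i(V_2)))$ by the desired average), the proof is complete and coincides with the argument in \cite{peng1}.
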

\begin{lem}
\label{lem2}
For $\psi(t)$, we have the following 
\begin{description}
\item[1.] $ \psi(t) $ is exponential convex, for all $ t > 0 $,
\item[2.] $ \frac{1}{{2}}(t-1)^{2}\leq \psi(t)\leq \frac{1}{{2}}(\psi'(t))^{2} $, $ t>0 $,
\item[3.] $\psi(t) \leq 4(t-1)^{2}  $, $ t\geq1. $
\end{description}
\end{lem}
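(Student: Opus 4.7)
The plan is to handle the three parts of the lemma in sequence, each relying on the derivative formulas (\ref{equation: 24})--(\ref{equation: 26}) and the inequalities of the preceding lemma.

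For Part~1 (exponential convexity), I will invoke the standard criterion that $\psi$ satisfies $\psi(\sqrt{t_1 t_2}) \leq \tfrac{1}{2}(\psi(t_1) + \psi(t_2))$ for all $t_1,t_2>0$ if and only if $u\mapsto \psi(e^u)$ is convex, which in turn is equivalent to $t\psi''(t)+\psi'(t)\geq 0$ on $(0,\infty)$. Equation~(\ref{equation: 27}) supplies this for $t<1$. For $t\geq 1$ I would note that $\psi''(t)>0$ by (\ref{equation: 25}) and that $\psi'(1)=0$ combined with $\psi''>0$ forces $\psi'$ to be strictly increasing, so $\psi'(t)\geq 0$; both summands are then nonnegative, which finishes the claim on the full half-line.

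For Part~2, the lower bound $\tfrac{1}{2}(t-1)^2 \leq \psi(t)$ is a direct consequence of the integral representation (\ref{equation: 23}) and the uniform bound $\psi''(\xi)>1$ from (\ref{equation: 25}). For the upper bound $\psi(t)\leq \tfrac{1}{2}\psi'(t)^2$, I would introduce the auxiliary function $h(t):=\psi(t)-\tfrac{1}{2}\psi'(t)^2$, differentiate to obtain $h'(t)=\psi'(t)\bigl(1-\psi''(t)\bigr)$, and exploit the fact that $\psi''(t)>1$ everywhere together with the sign of $\psi'(t)$ (negative on $(0,1)$, positive on $(1,\infty)$): $h'$ is nonnegative on $(0,1)$ and nonpositive on $(1,\infty)$, so $h$ attains its maximum at $t=1$, where $h(1)=0$. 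Hence $h(t)\leq 0$, giving the inequality.

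For Part~3, on $[1,\infty)$ one has $1/t^k \leq 1$ for every $k\geq 1$ and $e^{1/t-1}\leq 1$, so (\ref{equation: 25}) yields $\psi''(t)\leq 2+3+(2+1)=8$. Substituting this bound into the integral representation (\ref{equation: 23}) gives $\psi(t)\leq 8\cdot \tfrac{1}{2}(t-1)^2=4(t-1)^2$, as required.

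The only step requiring any real care is the upper bound in Part~2: one must correctly recognize that the sign change of $\psi'$ at $t=1$, together with $\psi''>1$, makes $t=1$ a global \emph{maximum} of $h$ rather than a minimum. The remaining arguments reduce to elementary estimates from the listed derivative formulas and straightforward use of (\ref{equation: 23}).
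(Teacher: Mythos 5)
Your proof is correct. Note that the paper itself states Lemma \ref{lem2} without any proof, so there is nothing to compare against directly; your argument supplies exactly the standard verification from the kernel-function literature (the $t\psi''(t)+\psi'(t)\geq 0$ criterion for exponential convexity, the integral representation (\ref{equation: 23}) with $\psi''>1$ for the lower bound, the monotonicity of $h(t)=\psi(t)-\tfrac{1}{2}\psi'(t)^2$ for the upper bound, and $\psi''(t)\leq\psi''(1)=8$ on $[1,\infty)$ for Part 3), and all the estimates check out against formulas (\ref{equation: 24})--(\ref{equation: 26}).
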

Now, let $ \varrho: [ 0, \infty) \rightarrow  [ 1, \infty)  $ be the inverse function of $ \psi(t) $, for all $ t \geq 1 $ then we have the following lemma
\begin{lem}
\label{lem3}
For $ \psi(t) $, we have
\[
\sqrt{1+s} \leq \varrho (s) \leq 1+ \sqrt{2s}, \textit{  } s \geq 0.
\]
\end{lem}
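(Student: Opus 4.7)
The plan is to invert the problem: for any $s\geq 0$, write $t=\varrho(s)$ so that $\psi(t)=s$ with $t\geq 1$, and then sandwich $t$ by exhibiting a quadratic upper bound and a quadratic lower bound for $\psi$ on $[1,\infty)$.

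For the upper estimate $\varrho(s)\leq 1+\sqrt{2s}$, I would invoke the inequality already recorded in Lemma~\ref{lem2}, item~2, which gives $\tfrac{1}{2}(t-1)^{2}\leq \psi(t)$ for every $t>0$. Substituting $\psi(t)=s$ and solving for $t$ in the branch $t\geq 1$ yields $t-1\leq \sqrt{2s}$, which is exactly the desired bound.

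For the lower estimate $\sqrt{1+s}\leq \varrho(s)$, the cleanest route is to show the complementary inequality
\[
\psi(t)\leq t^{2}-1 \qquad \text{for all } t\geq 1,
\]
which, after substituting $s=\psi(t)$, rearranges to $t^{2}\geq 1+s$, i.e.\ $t\geq \sqrt{1+s}$. To establish this pointwise inequality I would set $h(t):=\psi(t)-(t^{2}-1)=\tfrac{1}{2t^{2}}-\tfrac{3}{2}+e^{1/t-1}$, check that $h(1)=\tfrac{1}{2}-\tfrac{3}{2}+1=0$, and then differentiate to get $h'(t)=-\tfrac{1}{t^{3}}-\tfrac{1}{t^{2}}e^{1/t-1}<0$ for all $t>0$. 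Hence $h$ is strictly decreasing on $[1,\infty)$, so $h(t)\leq h(1)=0$ on this interval, giving the desired bound.

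There is no real obstacle here: the upper bound is immediate from the previous lemma, and the lower bound reduces to a one-variable monotonicity check for the explicit kernel in Eq.~(\ref{equation: 22}). The only point requiring care is to make sure we stay on the branch $t\geq 1$ of $\psi$, which is exactly how $\varrho$ is defined in the statement preceding the lemma.
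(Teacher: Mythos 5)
Your argument is correct. The paper states Lemma~\ref{lem3} without proof, so there is nothing to compare against directly, but your two-sided strategy is exactly the standard one for such inverse-function bounds: the upper bound follows from item~2 of Lemma~\ref{lem2} via $\tfrac{1}{2}(\varrho(s)-1)^{2}\leq\psi(\varrho(s))=s$, and your monotonicity check for $h(t)=\psi(t)-(t^{2}-1)$ is verified correctly ($h(1)=0$, $h'(t)=-\tfrac{1}{t^{3}}-\tfrac{1}{t^{2}}e^{1/t-1}<0$). A marginally shorter route to the same lower bound is to note that $\psi'(\xi)=2\xi-\tfrac{1}{\xi^{3}}-\tfrac{1}{\xi^{2}}e^{1/\xi-1}\leq 2\xi$ and integrate, giving $\psi(t)=\int_{1}^{t}\psi'(\xi)\,d\xi\leq t^{2}-1$ for $t\geq 1$ directly, but this is the same idea in integrated form.
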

\begin{thm} \cite{elghami 1}
\label{thm5}
Let $ \varrho: [ 0, \infty) \rightarrow  [ 1, \infty)  $ be the inverse function of $ \psi(t) $, $ t \geq 1 $. Then we have 
\[
\Psi(\beta V) \leq n \psi(\beta \varrho( \frac{\Psi(V)}{n})), \\\ \beta \geq 1   \\\  for  \\\\  V \in S_{++}^{n}.
\] 
\end{thm}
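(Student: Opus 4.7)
My plan is to reduce the matrix inequality to a scalar one via the spectral theorem and then close it with Jensen's inequality applied to a carefully chosen concave function. Since the eigenvalues of $\beta V$ are precisely $\beta\lambda_i(V)$, definition (\ref{equation: 5}) gives $\Psi(\beta V) = \sum_{i=1}^n \psi(\beta \lambda_i(V))$. Writing $\bar s := \Psi(V)/n = \frac{1}{n}\sum_i \psi(\lambda_i(V))$, the theorem becomes the purely scalar estimate
\[
\frac{1}{n}\sum_{i=1}^n \psi(\beta \lambda_i(V)) \;\leq\; \psi\bigl(\beta\,\varrho(\bar s)\bigr).
\]

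The key technical step is to introduce $\phi:[0,\infty)\to\mathbb{R}$ defined by $\phi(s) = \psi(\beta\varrho(s))$ and prove that $\phi$ is concave on $(0,\infty)$. Using $\varrho'(s) = 1/\psi'(\varrho(s))$ and setting $t = \varrho(s) \geq 1$, a direct differentiation yields
\[
\phi''(s) \;=\; \frac{\beta\bigl[\beta\,\psi''(\beta t)\,\psi'(t) - \psi'(\beta t)\,\psi''(t)\bigr]}{\psi'(t)^{3}}.
\]
Because $\psi$ attains its minimum at $t=1$ and $\psi'' > 0$ by (\ref{equation: 25}), we have $\psi'(t) > 0$ for $t>1$, so the denominator is positive. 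The numerator is exactly the negation of the expression in inequality (\ref{equation: 31}) of Lemma~1, so it is nonpositive. Hence $\phi''(s) \leq 0$ and $\phi$ is concave, which is the engine of the proof.

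When every $\lambda_i(V) \geq 1$, setting $s_i = \psi(\lambda_i(V))$ gives $\varrho(s_i) = \lambda_i(V)$, so $\phi(s_i) = \psi(\beta \lambda_i(V))$, and Jensen's inequality applied to the concave $\phi$ directly yields the required bound with right-hand side $\phi(\bar s) = \psi(\beta\varrho(\bar s))$. The step I expect to be the main obstacle is the case where some $\lambda_i(V) < 1$, since then $\varrho(\psi(\lambda_i)) > \lambda_i$ and the clean identification $\phi(s_i)=\psi(\beta\lambda_i)$ breaks down. My plan for this case is a monotone-rearrangement argument: replace each offending $\lambda_i < 1$ by its partner $b := \varrho(\psi(\lambda_i)) \geq 1$, which leaves $\Psi(V)$ (and hence $\bar s$) unchanged by construction, and check that it can only increase the contribution $\psi(\beta\lambda_i)$ to the left-hand side. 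The latter reduces to the inequality $\psi(\beta\lambda_i) \leq \psi(\beta b)$, which is immediate from the monotonicity of $\psi$ on each side of its minimum: if $\beta\lambda_i \geq 1$, then $1 \leq \beta\lambda_i < \beta b$ and $\psi$ is increasing on $[1,\infty)$; if $\beta\lambda_i < 1$, then $\psi(\beta\lambda_i) \leq \psi(\lambda_i) = \psi(b) \leq \psi(\beta b)$ using the decreasing branch on the left and the increasing branch on the right. Once the reduction is carried out, the concavity-Jensen argument of the previous paragraph finishes the proof.
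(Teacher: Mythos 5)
The paper itself offers no proof of this theorem: it is imported verbatim from \cite{elghami 1}, so there is no in-paper argument to compare yours against. Your proof is correct and complete as a self-contained replacement. The reduction to the scalar inequality via the eigenvalues of $\beta V$, the computation of $\phi''$ through $\varrho'(s)=1/\psi'(\varrho(s))$, the identification of the numerator of $\phi''$ with the negation of the left-hand side of Eq.~(\ref{equation: 31}), and the rearrangement step replacing each $\lambda_i<1$ by $\varrho(\psi(\lambda_i))\ge 1$ (which fixes $\Psi(V)$ and can only increase $\psi(\beta\lambda_i)$, by the two-branch monotonicity argument you give) are all sound. The only loose ends are harmless: the case $\beta=1$, where Eq.~(\ref{equation: 31}) is unavailable but $\phi(s)=\psi(\varrho(s))=s$ is linear so the bound holds with equality, and the endpoint $s=0$ (eigenvalues equal to $1$), where concavity of $\phi$ on all of $[0,\infty)$ follows from its continuity at $0$. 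For comparison, the proof in the cited source establishes the same bound by maximizing $\Psi(\beta V)$ subject to $\Psi(V)$ fixed and analyzing the first-order optimality conditions $\beta\psi'(\beta v_i)=\lambda\psi'(v_i)$, after the same preliminary reduction to eigenvalues $\ge 1$; there Eq.~(\ref{equation: 31}) is used to show the constrained maximizer has all coordinates equal to $\varrho(\Psi(V)/n)$. Your concavity-plus-Jensen route reaches the same conclusion while making the role of Eq.~(\ref{equation: 31}) completely transparent --- it is exactly the concavity of $s\mapsto\psi(\beta\varrho(s))$ --- and it avoids the existence and characterization bookkeeping of the variational argument.
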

\begin{thm}
Let $ 0 \leq \theta\leq 1$ and $ V_{+}= \frac{V}{\sqrt{1-\theta}}$. If $ \Psi(V) \leq \tau $, then we have
\[
\Psi(V_{+}) \leq \frac{2}{1-\theta}(\sqrt{2\tau}+\sqrt{n} \theta)^{2}.
\] 
\end{thm}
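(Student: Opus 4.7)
The plan is to apply Theorem~\ref{thm5} with the scaling factor $\beta = 1/\sqrt{1-\theta}$, which satisfies $\beta \geq 1$ since $\theta \in [0,1]$. Because $V_{+} = \beta V$, this immediately yields
\[
\Psi(V_{+}) \;\leq\; n\,\psi\!\left(\beta\,\varrho\!\left(\tfrac{\Psi(V)}{n}\right)\right),
\]
reducing the matrix estimate to a single scalar calculation involving $\psi$ and its inverse $\varrho$.

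Next I would control the argument of $\psi$ on the right-hand side. Using the hypothesis $\Psi(V) \leq \tau$ together with the monotonicity of $\varrho$ and the upper bound of Lemma~\ref{lem3}, I get
\[
\varrho\!\left(\tfrac{\Psi(V)}{n}\right) \;\leq\; \varrho\!\left(\tfrac{\tau}{n}\right) \;\leq\; 1 + \sqrt{\tfrac{2\tau}{n}}.
\]
Since $\psi$ is strictly increasing on $[1,\infty)$ (Lemma~1 gives $\psi''>0$, and $\psi'(1)=0$) and the argument $\beta\,\varrho(\Psi(V)/n)$ lies in $[1,\infty)$, I can replace it by the larger quantity $\beta\!\left(1+\sqrt{2\tau/n}\right)$ without decreasing $\psi$.

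At this point I would invoke Lemma~\ref{lem2}(3), $\psi(t) \leq 4(t-1)^{2}$ for $t \geq 1$, to obtain
\[
n\,\psi\!\left(\beta\!\left(1+\sqrt{\tfrac{2\tau}{n}}\right)\right) \;\leq\; 4n\!\left(\tfrac{1+\sqrt{2\tau/n}}{\sqrt{1-\theta}}-1\right)^{2}
\;=\; \tfrac{4n}{1-\theta}\left(\bigl(1-\sqrt{1-\theta}\bigr) + \sqrt{\tfrac{2\tau}{n}}\right)^{2}.
\]
The final simplification uses the elementary inequality $1-\sqrt{1-\theta}\leq \theta$, which follows from $\sqrt{1-\theta}\geq 1-\theta$ on $[0,1]$. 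Multiplying out $n$ under the square root then collapses the right-hand side into the desired form $\frac{C}{1-\theta}\bigl(\sqrt{2\tau}+\sqrt{n}\,\theta\bigr)^{2}$ for a universal constant $C$.

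The routine but delicate part is keeping the three successive overestimates (Theorem~\ref{thm5}, Lemma~\ref{lem3}, Lemma~\ref{lem2}(3)) going in the correct direction; the one genuinely non-mechanical step is the bound $1-\sqrt{1-\theta}\leq \theta$, which is what converts the awkward $1/\sqrt{1-\theta}$ denominator produced by the rescaling into the clean $\theta$ appearing in the claimed estimate.
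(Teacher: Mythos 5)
Your proposal follows exactly the same route as the paper's proof: Theorem~\ref{thm5} with $\beta=1/\sqrt{1-\theta}$, monotonicity of $\varrho$ together with the upper bound of Lemma~\ref{lem3}, the quadratic upper bound of Lemma~\ref{lem2}(3), and finally $1-\sqrt{1-\theta}\leq\theta$. All of those overestimates do run in the right direction as you claim.

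The one substantive issue is the constant, and it is precisely where your proof and the stated theorem part ways. Applying Lemma~\ref{lem2}(3) honestly, as you do, produces the prefactor $4n$ and hence the bound $\frac{4}{1-\theta}\bigl(\sqrt{2\tau}+\sqrt{n}\,\theta\bigr)^{2}$; your closing phrase ``for a universal constant $C$'' concedes that you obtain $C=4$, whereas the theorem asserts $C=2$. The paper reaches $2$ only by writing $\frac{4n}{2}$ at this step, i.e.\ by tacitly using $\psi(t)\leq 2(t-1)^{2}$ for $t\geq 1$ --- an inequality that is not Lemma~\ref{lem2}(3) and is in fact false near $t=1$, since $\psi''(1)=8$ forces $\psi(t)\sim 4(t-1)^{2}$ there. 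So your argument is the correct execution of the paper's own strategy, but it proves the displayed inequality only with $4$ in place of $2$; neither your route nor the paper's justifies the factor $2$. Since the later complexity analysis uses only the order of magnitude of $\Psi_{0}$, the discrepancy is harmless downstream, but as a proof of the theorem exactly as stated it leaves a gap that you should either close with a sharper bound on $\psi$ (which this kernel does not admit) or acknowledge by restating the theorem with the constant $4$.
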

\begin{proof}
 Using Theorem (\ref{thm5}) with $ \beta= \frac{1}{\sqrt{1-\theta}} $, Lemmas (\ref{lem2}), (\ref{lem3}) and $ \Psi(V)\leq \tau $, we have 
\[ 
\begin{array}{ccl}
\Psi (V_{+) }& \leq & n\psi\left( \frac{1}{\sqrt{1-\theta}} \varrho ( \frac{\Psi(V)}{n}\right)    
  \leq \frac{4n}{2}\left( \frac{\varrho ( \frac{\Psi(V)}{n})}{\sqrt{1-\theta}} -1\right) ^{2}= 2n\left( \frac{\varrho ( \frac{\Psi(V)}{n})-(\sqrt{1-\theta})}{\sqrt{1-\theta}} \right) ^{2}\leq 2n\left( \frac{1+\sqrt{2(\frac{\Psi(V)}{n}})-\sqrt{1-\theta}}{\sqrt{1-\theta}} \right) ^{2}\\\\
 &\leq & 2n\left( \frac{1+\sqrt{2(\frac{\tau}{n}})-\sqrt{1-\theta}}{\sqrt{1-\theta}} \right) ^{2}
\leq  2n\left( \frac{\sqrt{(\frac{2\tau}{n}})+\theta}{\sqrt{1-\theta}} \right) ^{2}
    \leq \frac{2}{1-\theta}\left( \sqrt{2\tau}+\sqrt{n}\theta \right) ^{2},    
\end{array}   
\]
 Where the last inequality is holds since $ 1-\sqrt{1-\theta}  =  \frac{\theta}{1+\sqrt{1-\theta}} \leq \theta,$   for all $   0\leq \theta < 1. $\\
Denote $ \Psi_{0}=\frac{2}{1-\theta}\left( \sqrt{2\tau} + \sqrt{n}\theta\right) ^{2}$. Then $ \Psi_{0} $ is an upper bound of $\Psi(V)  $ during the process of the algorithm.
\end{proof}
\section{Complexity Analysis}
\subsection{An estimation of the step size}
  The aim of this paper is to define a new kernel function, and to obtain new complexity results for an (SDLCP) problem, during an inner iteration, we compute a default step size  $ \alpha $, and the decrease of the proximity function. \\
After an inner iteration, new iterates $  X_{+} = X + \alpha \Delta X =\sqrt{\mu}D(V+\alpha D_{X})D$and $Y_{+} = Y + \alpha \Delta Y =\sqrt{\mu}D^{-1}(V+\alpha D_{Y})D^{-1}  $
 are generated, where $ \alpha $ is the step size and $  D_{X}$, $ D_{Y} $ and $ D $ are defined by (\ref{equation: 19}).
On the other hand, from (\ref{equation: 17}), we have $  V_{+}^{2} =(V +\alpha D_{X})(V+\alpha D_{Y})$ and it is clear that the matrix $V_{+}^{2}$ is similar to the matrix $ (V +\alpha D_{X})^{\frac{1}{2}}(V+\alpha D_{Y})(V +\alpha D_{X})^{\frac{1}{2}}. $
By assuming that $ (V +\alpha D_{X})\succ 0$ and $ (V +\alpha D_{Y})\succ  0 $ for such feasible step size $ \alpha $ and we deduce that they have the same eigenvalues.
Since the proximity after one step is defined by :
\begin{center}
$ \Psi(V_{+})=\Psi( [(V +\alpha D_{X})^{\frac{1}{2}}(V+\alpha D_{Y})(V +\alpha D_{X})^\frac{1}{2}]^\frac{1}{2}) $
\end{center}	
By Theorem (\ref{thm4}), we have $ \Psi(V_{+})\leq\frac{1}{2}[\Psi( (V +\alpha D_{X})+\Psi(V+\alpha D_{Y})]$. 
Define, for $\alpha > 0 $ , $ f(\alpha)=\Psi(V_{+})-\Psi(V) $ and $  f_{1}(\alpha)=\frac{1}{2}[\Psi( (V +\alpha D_{X})+\Psi(V+\alpha D_{Y})]-\Psi(V)$\\
Then $  f(\alpha)$ is the difference of the proximity between a new iterate and a current iterate for a fixed $\mu>0$.
It is easily seen that, $ f_{1}(0)=f(0)=0 $ and $  f(\alpha)\leq f_{1}(\alpha)$.
 Furthermore, $ f_{1}(\alpha) $ is a convex function.
\\Now, to estimate the decrease of the proximity during one step, we need the two successive derivatives of $ f_{1}(\alpha) $ with respect to $\alpha $.
\\By using the rule of differentiability \cite{horn}, \cite{peng1}, we obtain
\begin{center}
$ f'_{1}(\alpha)= \frac{1}{2}Tr(\psi'((V +\alpha D_{X})D_{X}+\psi'(V+\alpha D_{Y})D_{Y}) $ 
\end{center}
 and  
\begin{center}
 $ f''_{1}(\alpha)= \frac{1}{2}Tr(\psi''((V +\alpha D_{X})D_{X}^{2}+\psi''(V+\alpha D_{Y})D_{Y}^{2}) $
\end{center}
Hence, by using (\ref{equation: 19}) and (\ref{equation: 32}), we obtain 
\begin{center}
$ f'_{1}(0)= \frac{1}{2}Tr(\psi'((V)(D_{X}+D_{Y}))=\frac{1}{2}Tr(-\psi'(V)^{2})=-2\delta^{2}(V). $ 
\end{center}
In what follows, we use the short notation $ \delta(V):=\delta $
\begin{lem}\cite[Lemma 3.4.4]{elghami 1}
One has 
\begin{center}
 $ f''_{1}(\alpha)\leq 2\delta^{2}\psi''(\lambda_{n}(V)-2 \alpha\delta)  $
\end{center}
\end{lem}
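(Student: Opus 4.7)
The plan is to bound each trace term in
$f''_1(\alpha)=\tfrac{1}{2}\mathrm{Tr}\bigl(\psi''(V+\alpha D_X)D_X^{2}\bigr)+\tfrac{1}{2}\mathrm{Tr}\bigl(\psi''(V+\alpha D_Y)D_Y^{2}\bigr)$
by replacing $\psi''$ evaluated on the matrix by its value at a single scalar lower bound of the spectrum. Since $\psi'''(t)<0$ for all $t>0$ (Eq.~(\ref{equation: 28})), the scalar function $\psi''$ is strictly decreasing on $(0,\infty)$, so for any symmetric positive definite $M$ with smallest eigenvalue $\lambda_{\min}(M)$ and any symmetric $N$, diagonalising $M=Q\,\mathrm{diag}(\mu_i)\,Q^T$ and writing $\tilde N=Q^TNQ$ yields
\[
\mathrm{Tr}\bigl(\psi''(M)N^{2}\bigr)=\sum_{i}\psi''(\mu_i)\sum_{j}\tilde N_{ij}^{2}\le \psi''(\lambda_{\min}(M))\,\|N\|_F^{2}.
\]
This is the one inequality on matrix functions I would need; it uses only $\psi''>0$ and its monotonicity.

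Next I would control the smallest eigenvalue of $V+\alpha D_X$ and $V+\alpha D_Y$ from below. By Weyl's inequality, $\lambda_{\min}(V+\alpha D_X)\ge \lambda_n(V)-\alpha\,\lambda_{\max}(|D_X|)\ge \lambda_n(V)-\alpha\|D_X\|_F$, and similarly for $D_Y$. So I need to show $\|D_X\|_F\le 2\delta$ and $\|D_Y\|_F\le 2\delta$. This is the step where non-orthogonality shows up: from the first equation of system~(\ref{equation: 21}), $\|D_X+D_Y\|_F^{2}=\|\psi'(V)\|^{2}=4\delta^{2}$, and expanding gives
\[
\|D_X\|_F^{2}+\|D_Y\|_F^{2}=4\delta^{2}-2\,D_X\bullet D_Y\le 4\delta^{2},
\]
because $D_X\bullet D_Y=\tfrac{1}{\mu}\Delta X\bullet L(\Delta X)\ge 0$ by monotonicity of $L$ (this was verified in Section~2). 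Hence each of $\|D_X\|_F,\|D_Y\|_F$ is at most $2\delta$, and therefore
\[
\lambda_{\min}(V+\alpha D_X),\ \lambda_{\min}(V+\alpha D_Y)\ \ge\ \lambda_n(V)-2\alpha\delta.
\]

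Finally I would combine these ingredients. Applying the trace inequality with $M=V+\alpha D_X$, $N=D_X$ and using the decrease of $\psi''$ gives
\[
\mathrm{Tr}\bigl(\psi''(V+\alpha D_X)D_X^{2}\bigr)\le \psi''\bigl(\lambda_n(V)-2\alpha\delta\bigr)\,\|D_X\|_F^{2},
\]
and analogously for the $D_Y$ term. Summing and using $\|D_X\|_F^{2}+\|D_Y\|_F^{2}\le 4\delta^{2}$ yields
\[
f''_1(\alpha)\le \tfrac{1}{2}\psi''(\lambda_n(V)-2\alpha\delta)\cdot 4\delta^{2}=2\delta^{2}\psi''(\lambda_n(V)-2\alpha\delta),
\]
as claimed. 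The step I expect to be the main obstacle is the clean eigenvalue bound $\|D_X\|_F,\|D_Y\|_F\le 2\delta$, because in the SDO case one has $D_X\bullet D_Y=0$ and an equality; here the best one can do is an inequality, which is exactly why monotonicity of $L$ must be invoked at this point.
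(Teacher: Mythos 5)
Your argument is correct and is essentially the proof of Lemma 3.4.4 in El Ghami's thesis, which the paper cites without reproducing: the trace bound $\mathrm{Tr}(\psi''(M)N^2)\le\psi''(\lambda_{\min}(M))\|N\|_F^2$ from $\psi'''<0$, the eigenvalue bound via $\|D_X\|_F,\|D_Y\|_F\le 2\delta$, and the summation step are exactly the standard route, with the one adaptation needed for SDLCP being the one you make, namely replacing the SDO identity $\|D_X\|_F^2+\|D_Y\|_F^2=4\delta^2$ by the inequality $\le 4\delta^2$ via $D_X\bullet D_Y\ge 0$. The only point left implicit is the standing assumption that $\alpha$ is small enough that $\lambda_n(V)-2\alpha\delta>0$, so that $\psi''$ is defined and decreasing at that argument.
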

\begin{lem}\cite[Lemma 3.4.5]{elghami 1}
If the step size $\alpha $ satisfies 
\begin{center}
 $-\psi'(\lambda_{n}(V)-2 \alpha\delta)+\psi'(\lambda_{n}(V)\leq 2\delta.  $
 \begin{flushleft}
 One has $ f'_{1}(\alpha)\leq 0.$
 \end{flushleft} 
\end{center}
\end{lem}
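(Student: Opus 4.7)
The plan is to bound $f_1'(\alpha)$ above by integrating the upper bound on $f_1''$ from the previous lemma, starting from the known value $f_1'(0) = -2\delta^2$. Since $f_1$ is convex, both $f_1'$ is increasing and the strategy of controlling $f_1'(\alpha)$ via $f_1'(0) + \int_0^\alpha f_1''(t)\,dt$ is natural; the previous lemma already supplies a one-variable majorant of $f_1''$ involving only $\psi''$ evaluated at the shifted scalar argument $\lambda_n(V) - 2t\delta$, which is precisely what makes the integral explicit.

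Concretely, I would write
\[
f_1'(\alpha) = f_1'(0) + \int_0^{\alpha} f_1''(t)\,dt \;\leq\; -2\delta^2 + \int_0^{\alpha} 2\delta^2\,\psi''\!\bigl(\lambda_n(V) - 2t\delta\bigr)\,dt,
\]
then perform the change of variables $u = \lambda_n(V) - 2t\delta$, $du = -2\delta\,dt$, which collapses the integral to an antiderivative of $\psi''$, namely $\psi'$. This gives
\[
\int_0^{\alpha} 2\delta^2\,\psi''\!\bigl(\lambda_n(V) - 2t\delta\bigr)\,dt = \delta\,\bigl[\psi'(\lambda_n(V)) - \psi'(\lambda_n(V) - 2\alpha\delta)\bigr].
\]
Substituting back yields
\[
f_1'(\alpha) \;\leq\; \delta\,\Bigl[-2\delta - \psi'(\lambda_n(V) - 2\alpha\delta) + \psi'(\lambda_n(V))\Bigr],
\]
and the hypothesis $-\psi'(\lambda_n(V) - 2\alpha\delta) + \psi'(\lambda_n(V)) \leq 2\delta$ immediately forces the bracket to be $\leq 0$, whence $f_1'(\alpha) \leq 0$ since $\delta \geq 0$.

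There is essentially no obstacle of substance here: the previous lemma has done the heavy lifting by reducing the matrix-trace expression for $f_1''$ to a scalar estimate, and the identity $f_1'(0) = -2\delta^2$ was already computed from the Newton system $D_X + D_Y = -\psi'(V)$. The only points requiring minor care are (i) ensuring that the argument $\lambda_n(V) - 2t\delta$ stays in the domain $(0,\infty)$ of $\psi''$ over the interval of integration, which is implicit in the feasibility assumption on the step size, and (ii) correctly tracking the sign produced by the change of variables. With these in place, the conclusion follows in one short display.
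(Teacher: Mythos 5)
Your proof is correct and is essentially the standard argument: the paper itself gives no proof (it cites El Ghami's thesis, Lemma 3.4.5), and the proof there is exactly this one --- write $f_1'(\alpha)=f_1'(0)+\int_0^\alpha f_1''(t)\,dt$, insert $f_1'(0)=-2\delta^2$ and the majorant $f_1''(t)\le 2\delta^2\psi''(\lambda_n(V)-2t\delta)$ from the preceding lemma, integrate via the substitution $u=\lambda_n(V)-2t\delta$, and invoke the hypothesis. Your sign bookkeeping and the resulting bound $f_1'(\alpha)\le\delta\bigl[-2\delta+\psi'(\lambda_n(V))-\psi'(\lambda_n(V)-2\alpha\delta)\bigr]\le 0$ are both right.
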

\begin{lem}\cite[Lemma 3.4.6]{elghami 1}
\label{lem6}
Let $  \rho  : [0,\infty)\rightarrow (0, 1]$ denote the inverse function of the restriction of $ -\frac{1}{2}\psi'(t) $ on the interval (0,1], then the largest possible value of the step size of $ \alpha $ satisfying $ f'_{1}(\alpha)\leq 0 $ is given by
\begin{center}
$ \overline{\alpha }=\frac{1}{2\delta }(\rho (\delta )-\rho (2\delta )). $
\end{center}
\end{lem}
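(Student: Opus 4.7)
The plan is to reduce the question to a one-variable optimization and exploit the monotonicity properties of $\psi''$ already established in Lemma (Eq. (\ref{equation: 28})). Starting from the previous lemma, a sufficient condition for $f'_1(\alpha)\le 0$ is
\[
-\psi'(\lambda_n(V)-2\alpha\delta)+\psi'(\lambda_n(V))\le 2\delta.
\]
Because $\psi''>0$ (Eq. (\ref{equation: 25})), $\psi'$ is strictly increasing, so $-\psi'$ is strictly decreasing and the left-hand side above is strictly increasing in $\alpha$. Consequently, the \emph{largest} $\alpha$ preserving the inequality is attained at equality, i.e.\ $\alpha$ must solve
\[
-\psi'(\lambda_n(V)-2\alpha\delta)+\psi'(\lambda_n(V))=2\delta. \tag{$\ast$}
\]

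Next I would parameterize the smallest eigenvalue through $\rho$. Restricting to the relevant regime $\lambda_n(V)\in(0,1]$ (the regime in which the worst case occurs; if $\lambda_n(V)>1$ one checks directly that the step size is at least as large), set $\sigma=-\tfrac12\psi'(\lambda_n(V))$, so that $\lambda_n(V)=\rho(\sigma)$. Using the bound $2\delta\ge\|\psi'(V)\|\ge |\psi'(\lambda_n(V))|=2\sigma$, one has $\sigma\in[0,\delta]$. Equation $(\ast)$ then becomes $-\psi'(\rho(\sigma)-2\alpha\delta)=2(\sigma+\delta)$, i.e.\ $\rho(\sigma)-2\alpha\delta=\rho(\sigma+\delta)$. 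Therefore
\[
\alpha^{*}(\sigma)=\frac{1}{2\delta}\bigl(\rho(\sigma)-\rho(\sigma+\delta)\bigr),
\]
and the largest step size \emph{guaranteed} by the sufficient condition is $\overline{\alpha}=\min_{\sigma\in[0,\delta]}\alpha^{*}(\sigma)$.

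The remaining (and main) step is to show that this minimum is attained at $\sigma=\delta$. By the inverse function theorem applied to $-\tfrac12\psi'$, one has $\rho'(s)=-2/\psi''(\rho(s))$. Differentiating,
\[
\frac{d\alpha^{*}}{d\sigma}=\frac{1}{2\delta}\bigl(\rho'(\sigma)-\rho'(\sigma+\delta)\bigr)
=\frac{1}{\delta}\left(\frac{1}{\psi''(\rho(\sigma+\delta))}-\frac{1}{\psi''(\rho(\sigma))}\right).
\]
Since $\rho$ is strictly decreasing and $\sigma+\delta>\sigma$, we have $\rho(\sigma+\delta)<\rho(\sigma)$; combined with $\psi'''(t)<0$ for $t>0$ (Eq. (\ref{equation: 28}))---so that $\psi''$ is strictly decreasing---this gives $\psi''(\rho(\sigma+\delta))>\psi''(\rho(\sigma))$, hence $d\alpha^{*}/d\sigma<0$. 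Thus $\alpha^{*}$ is strictly decreasing on $[0,\delta]$, and the minimum occurs at $\sigma=\delta$, yielding
\[
\overline{\alpha}=\frac{1}{2\delta}\bigl(\rho(\delta)-\rho(2\delta)\bigr).
\]

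The principal obstacle is the monotonicity argument in the last step: it is exactly where the third-derivative property $\psi'''<0$ is essential, and a minor technical point that needs care is confirming that the apparently omitted regime $\lambda_n(V)>1$ produces only larger feasible step sizes, so that the worst case really is captured by $\sigma=\delta$.
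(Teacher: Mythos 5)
The paper does not prove this lemma itself; it is quoted verbatim from El Ghami's thesis (the cited Lemma 3.4.6), so there is no in-paper proof to compare against. Your argument is correct and is essentially the standard one from that source: saturate the sufficient condition $-\psi'(\lambda_n(V)-2\alpha\delta)+\psi'(\lambda_n(V))\le 2\delta$ at equality, observe that $2\delta\ge|\psi'(\lambda_n(V))|$ constrains the free parameter, and use $\psi'''<0$ (so $\psi''$ is decreasing) to show the worst case occurs when $-\tfrac12\psi'(\lambda_n(V))=\delta$, i.e.\ $\lambda_n(V)=\rho(\delta)$. Your only cosmetic deviation is parameterizing by $\sigma=-\tfrac12\psi'(\lambda_n(V))\in[0,\delta]$ and differentiating $\rho$ via $\rho'(s)=-2/\psi''(\rho(s))$, rather than differentiating the defining equation implicitly in $\lambda_n(V)$ as the reference does; the latter has the small advantage of covering the regime $\lambda_n(V)>1$ (where $\sigma$ would be negative and $\rho$ undefined) in the same stroke, whereas you must, as you note, dispose of that case separately --- which is easy, since $\alpha$ solving the equality is increasing in $\lambda_n(V)$ by the same sign argument, so values $\lambda_n(V)>1\ge\rho(\delta)$ only enlarge the admissible step.
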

\begin{lem}
\label{lem7}
Let $ \rho  $ and  $ \overline{\alpha } $ the same as be defined in Lemma (\ref{lem6}). Then
\begin{center}
$ \overline{\alpha }\geqslant\tilde {\alpha}=\frac{1}{\psi''(\rho(2\delta))}.  $
\end{center}
\end{lem}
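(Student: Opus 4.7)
The plan is to convert the inequality $\overline{\alpha} \geq \tilde{\alpha}$ into a statement about a difference quotient of the function $\rho$ (or equivalently of $-\tfrac{1}{2}\psi'$), and then bound that difference quotient using the concavity of $\psi'$ on $(0,1]$, which is handed to us for free by the sign condition $\psi'''(t) < 0$ for all $t>0$ proved in the preceding lemma.

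First I would introduce the shorthand $t_1 := \rho(2\delta)$ and $t_2 := \rho(\delta)$. By the definition of $\rho$ as the inverse of the restriction of $-\tfrac{1}{2}\psi'$ to $(0,1]$, both $t_1$ and $t_2$ lie in $(0,1]$, and since $\psi''>0$ makes $-\tfrac{1}{2}\psi'$ strictly decreasing there, its inverse $\rho$ is also strictly decreasing; hence $t_1 \leq t_2$. The two defining identities give
\[
-\tfrac{1}{2}\psi'(t_1) = 2\delta, \qquad -\tfrac{1}{2}\psi'(t_2) = \delta,
\]
so that $\psi'(t_2) - \psi'(t_1) = 2\delta$. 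In this notation the claim reduces to showing
\[
\overline{\alpha} \;=\; \frac{t_2 - t_1}{2\delta} \;\geq\; \frac{1}{\psi''(t_1)},
\]
which, after clearing denominators, becomes the equivalent clean statement $\psi'(t_2) - \psi'(t_1) \leq \psi''(t_1)\,(t_2 - t_1)$.

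The second step is to prove precisely that inequality. Writing the left-hand side as an integral,
\[
\psi'(t_2) - \psi'(t_1) \;=\; \int_{t_1}^{t_2} \psi''(\tau)\, d\tau,
\]
I would invoke Eq.~(\ref{equation: 28}), namely $\psi'''(\tau) < 0$ for all $\tau > 0$, which tells us that $\psi''$ is strictly decreasing on $(0,\infty)$. Hence for every $\tau \in [t_1, t_2]$ one has $\psi''(\tau) \leq \psi''(t_1)$, and the integral is bounded above by $\psi''(t_1)(t_2 - t_1)$. This is exactly the inequality required, and dividing through by $2\delta > 0$ and by $\psi''(t_1) > 0$ recovers $\overline{\alpha} \geq 1/\psi''(\rho(2\delta)) = \tilde{\alpha}$.

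I do not expect any serious obstacle here: the whole argument rests on the sign of $\psi'''$, which has already been established, together with the fact that $\rho$ translates the size comparison $\delta \leq 2\delta$ into a reversed comparison $t_2 \geq t_1$ inside $(0,1]$. The only mild care needed is to make the passage from the statement in terms of $\rho$ to the statement in terms of $\psi'$ at $t_1,t_2$ explicit, so that the monotonicity of $\psi''$ can be applied cleanly via the integral representation (the mean-value theorem would work just as well, replacing $\psi''(\xi)$ for some $\xi \in (t_1,t_2)$ by the larger quantity $\psi''(t_1)$).
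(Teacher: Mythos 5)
Your proof is correct and is essentially the standard argument for this lemma (which the paper states without its own proof, deferring to El Ghami's framework as with the surrounding lemmas): the reference version differentiates the identity $-\tfrac{1}{2}\psi'(\rho(\sigma))=\sigma$ to get $\rho'(\sigma)=-2/\psi''(\rho(\sigma))$ and then integrates over $\sigma\in[\delta,2\delta]$, which is exactly your integral $\int_{t_1}^{t_2}\psi''(\tau)\,d\tau$ after the substitution $\tau=\rho(\sigma)$. Both arguments hinge on the same two facts you isolate, namely that $\rho$ is decreasing and that $\psi'''<0$ makes $\psi''$ decreasing, so nothing needs to be added.
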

We need to compute $ \rho (2\delta )=s $; where $ \rho  : [0,\infty)\rightarrow (0, 1]$ be the inverse of $ -\frac{1}{2}\psi'(t) $ for all $ t\in [0,1)$. This implies 
\[ 
\begin{array}{ccl}
 -\psi'(t)=4\delta
&\Leftrightarrow & -2t +\frac{1}{t^{3}}+\frac{1}{t^{2}}e^{\frac{1}{t} -1}=4\delta\\
& \Leftrightarrow & e^{\frac{1}{t} -1}= t^{2}(4\delta+2t-\frac{1}{t^{3}})   \ (*)\\
&\Rightarrow& t\geq\frac{1}{1+log(4\delta+1)} 
\end{array} 
\] 
Using the definition of  $ \psi''(t) $ and (*). If $ s\leq 1,$   we have   $ \frac{s-1}{s^{5}}\leq 0 $   and  $ \frac{1}{s^{2}} \leq (1+log(4\delta+1))^{2}. $
\begin{equation}
\psi''(t)= 2+ \frac{3}{t^{4}}+(\frac{2}{t^{3}}+ \frac{1}{t^{4}})e^{\frac{1}{t}-1}\leq 6 + 2(6\delta + 1)(1 + log(4\delta + 1))^{2}
\end{equation}
\begin{equation}
\tilde {\alpha}:=\frac{1}{\psi''(\rho(2\delta)}=\frac{1}{\psi''(s)}\geq\frac{1}{6 + 2(6\delta + 1)(1 + log(4\delta + 1))^{2}.}
\end{equation}
Next lemma shows that the proximity function $ \psi(t) $  with the default step size $ \alpha $ is decreasing
\begin{lem}\cite[Lemma 3.4]{elghami 2}
\label{lem8}
Let $ h(t)$ be a twice differentiable convex function with $ h(0)=0, h'(0)<0$, and let $ h(t)$ attains its (global) minimum at $ t > 0 $. If $ h''(t) $ is increasing for $  t \in [0, t^{\ast}]$, then $ h(t)=\frac{th'(0)}{2}. $
 $ f_{1}(\alpha) $ holds the condition of the above lemma,
 for all $ 0\leq \alpha\leq\overline{\alpha }; $
\begin{equation}
 f(\alpha) \leq f_{1}(\alpha)\leq \frac{f'_{1}(0)}{2}\alpha      
\end{equation}
\end{lem}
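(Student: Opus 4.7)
The plan is to prove the abstract inequality — which I read as $h(t^{\ast})\leq \tfrac{1}{2}\, t^{\ast} h'(0)$, the standard form of the lemma in the IPM literature (the displayed statement as written appears to contain a typo) — and then verify its hypotheses for $f_1$ on $[0,\overline{\alpha}]$.

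For the abstract inequality, I would start from the second-order Taylor formula with integral remainder,
\[
h(t^{\ast}) \;=\; h(0) + t^{\ast} h'(0) + \int_0^{t^{\ast}}(t^{\ast}-s)\,h''(s)\,ds.
\]
Since $t^{\ast}$ is the global minimizer, $h'(t^{\ast})=0$, so $\int_0^{t^{\ast}} h''(s)\,ds = h'(t^{\ast})-h'(0) = -h'(0)$. The key step is Chebyshev's integral inequality: on $[0,t^{\ast}]$ the weight $s\mapsto(t^{\ast}-s)$ is decreasing while $h''$ is non-decreasing by hypothesis, hence
\[
\int_0^{t^{\ast}}(t^{\ast}-s)\,h''(s)\,ds \;\leq\; \frac{1}{t^{\ast}}\left(\int_0^{t^{\ast}}(t^{\ast}-s)\,ds\right)\!\left(\int_0^{t^{\ast}}h''(s)\,ds\right) \;=\; -\frac{t^{\ast}}{2}\,h'(0).
\]
Inserting this bound together with $h(0)=0$ into the Taylor formula produces $h(t^{\ast}) \leq t^{\ast} h'(0) - \tfrac{1}{2} t^{\ast} h'(0) = \tfrac{1}{2} t^{\ast} h'(0)$, as claimed.

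To apply this with $h=f_1$ and $t^{\ast} = \overline{\alpha}$, I would collect the hypotheses already on hand: $f_1$ is convex (noted right after Theorem~\ref{thm4}), $f_1(0)=0$, and $f_1'(0)=-2\delta^{2}<0$ (computed just before the lemma), while Lemma~\ref{lem6} supplies the interior minimizer $\overline{\alpha}$ with $f_1'(\overline{\alpha})=0$. For the monotonicity of $f_1''$ on $[0,\overline{\alpha}]$, I would invoke the earlier bound $f_1''(\alpha)\leq 2\delta^{2}\psi''(\lambda_n(V)-2\alpha\delta)$ combined with $\psi'''<0$ from Eq.~\eqref{equation: 28}: the argument $\lambda_n(V)-2\alpha\delta$ decreases with $\alpha$ and $\psi''$ is itself decreasing, so the majorant $\alpha\mapsto\psi''(\lambda_n(V)-2\alpha\delta)$ is increasing — the effective version of ``$h''$ increasing'' used in Bai--El~Ghami style analyses. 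Chaining the resulting bound $f_1(\alpha)\leq \tfrac{\alpha}{2}f_1'(0)$ with $f(\alpha)\leq f_1(\alpha)$, which comes from Theorem~\ref{thm4}, then delivers the displayed conclusion.

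The main obstacle is the Chebyshev integral step: one must carefully check opposite monotonicity of the two factors and that the $1/t^{\ast}$ normalization is precisely what extracts the factor $1/2$ appearing in the complexity constant downstream. Everything else amounts to bookkeeping with identities already established in the paper.
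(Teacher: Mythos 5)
The paper offers no proof of this lemma at all: it is imported verbatim from the cited reference, so there is no in-paper argument to compare against, and the benchmark is the standard proof there, which bounds the remainder by $\int_0^t\int_0^\xi h''(\zeta)\,d\zeta\,d\xi\le\int_0^t\xi h''(\xi)\,d\xi=th'(t)-h(t)$ and concludes $2h(t)\le t\bigl(h'(0)+h'(t)\bigr)\le th'(0)$ for \emph{every} $t\in[0,t^{\ast}]$. Your Chebyshev-integral route is a correct and genuinely different way to get the same estimate, and your reading of the misprinted ``$=$'' as ``$\le$'' is the right one. The first genuine gap is that, as written, your argument proves the bound only at the single point $t=t^{\ast}$, because you use $h'(t^{\ast})=0$ to evaluate $\int_0^{t^{\ast}}h''$. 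The lemma asserts the inequality for all $0\le\alpha\le\overline{\alpha}$, and this generality is actually used downstream: Lemma \ref{lem10} invokes the bound at $\tilde{\alpha}\le\overline{\alpha}$, not at $\overline{\alpha}$. The repair is immediate --- run the same Chebyshev step on $[0,t]$ to obtain $h(t)\le th'(0)+\tfrac{t}{2}\bigl(h'(t)-h'(0)\bigr)=\tfrac{t}{2}\bigl(h'(0)+h'(t)\bigr)\le\tfrac{t}{2}h'(0)$, discarding $h'(t)\le 0$ --- but it must be said, since your final ``chaining'' sentence asserts the general-$\alpha$ inequality that your displayed computation does not deliver.

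The second gap is that you verify the hypotheses for the wrong function. The abstract lemma is applied to $h=f_{1}$, yet you never show that $f_{1}''$ is increasing on $[0,\overline{\alpha}]$; you show (correctly, via $\psi'''<0$) that the \emph{upper bound} $2\delta^{2}\psi''(\lambda_{n}(V)-2\alpha\delta)$ is increasing in $\alpha$. Likewise, $\overline{\alpha}$ from Lemma \ref{lem6} is the largest step size for which the sufficient condition $-\psi'(\lambda_{n}(V)-2\alpha\delta)+\psi'(\lambda_{n}(V))\le 2\delta$ forces $f_{1}'(\alpha)\le 0$; it is not the global minimizer of $f_{1}$ itself. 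Both mismatches are resolved the way the source reference resolves them: introduce the majorant $f_{2}$ determined by $f_{2}(0)=0$, $f_{2}'(0)=-2\delta^{2}$ and $f_{2}''(\alpha)=2\delta^{2}\psi''(\lambda_{n}(V)-2\alpha\delta)$. Then $f_{2}''\ge f_{1}''$ gives $f_{1}\le f_{2}$ on $[0,\overline{\alpha}]$, $f_{2}''$ is increasing, $\overline{\alpha}$ is exactly the zero of $f_{2}'$ (so it \emph{is} the global minimizer of $f_{2}$), the abstract lemma applies to $f_{2}$, and the chain $f(\alpha)\le f_{1}(\alpha)\le f_{2}(\alpha)\le\tfrac{\alpha}{2}f_{2}'(0)=\tfrac{\alpha}{2}f_{1}'(0)$ closes the proof. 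Your phrase about ``the effective version of $h''$ increasing'' gestures at exactly this, but the intermediate function has to be constructed explicitly for the logic to go through.
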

Then we have the following lemmas to obtain the upper bound for the decreasing value of the proximity in the inner  iteration.
\begin{lem}
\label{lem9}
For any $ \alpha $ satisfying $ \alpha\leq\overline{\alpha } $, we have : $ f(\alpha) \leq-\alpha\delta^{2}$
\end{lem}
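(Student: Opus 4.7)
The plan is to deduce this inequality as an essentially immediate corollary of Lemma \ref{lem8} together with the previously computed value $f'_{1}(0)=-2\delta^{2}$. The main work is not a new calculation but checking that the hypotheses of Lemma \ref{lem8} apply to $f_{1}$ on the interval $[0,\overline{\alpha}]$, so that the linear upper bound $f_{1}(\alpha)\leq\tfrac{f'_{1}(0)}{2}\alpha$ is available throughout that range.

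First I would recall the facts already assembled in the excerpt: $f_{1}(0)=0$, $f_{1}$ is convex, and the direct computation gives $f'_{1}(0)=-2\delta^{2}<0$ (assuming $\delta>0$; otherwise $V=I$, $f(\alpha)\equiv 0$, and the claim is trivial). Next I would verify that $f''_{1}$ is monotonically increasing on $[0,\overline{\alpha}]$. This is where Lemma 1 of the paper enters through the estimate
\[
f''_{1}(\alpha)\leq 2\delta^{2}\,\psi''(\lambda_{n}(V)-2\alpha\delta),
\]
combined with the fact that $\psi'''(t)<0$ for all $t>0$ (Eq.\,(\ref{equation: 28})). Since $\psi'''<0$ means $\psi''$ is a decreasing function of its argument, and the argument $\lambda_{n}(V)-2\alpha\delta$ decreases as $\alpha$ increases, the bound on $f''_{1}(\alpha)$ is increasing in $\alpha$; the analogous argument carried out directly on $f''_{1}$ (which is a sum of two terms of the same shape evaluated at $\lambda_{i}(V)+\alpha\lambda_{i}(D_{X})$ and $\lambda_{i}(V)+\alpha\lambda_{i}(D_{Y})$) shows that $f''_{1}$ itself inherits the required monotonicity on $[0,\overline{\alpha}]$.

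With the hypotheses in place, I would then apply Lemma \ref{lem8} to obtain
\[
f(\alpha)\;\leq\;f_{1}(\alpha)\;\leq\;\frac{f'_{1}(0)}{2}\,\alpha\;=\;\frac{-2\delta^{2}}{2}\,\alpha\;=\;-\alpha\delta^{2}
\]
for every $\alpha\in[0,\overline{\alpha}]$, which is exactly the claim. The inequality $f(\alpha)\leq f_{1}(\alpha)$ was already noted right after the definitions of $f$ and $f_{1}$, so nothing further is required.

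The main obstacle, if any, is the verification of the monotonicity of $f''_{1}$ on $[0,\overline{\alpha}]$, since Lemma \ref{lem8} only hands out its conclusion under the assumption that $h''$ is increasing up to the minimizer $t^{\ast}$. I expect this to be routine because $\psi'''<0$ from Eq.\,(\ref{equation: 28}) makes each $\psi''(\lambda_{i}(V)+\alpha\lambda_{i}(D_{X,Y}))$ an increasing function of $\alpha$ (for every eigenvalue index), and summing preserves monotonicity. Everything else is then bookkeeping: substitute $f'_{1}(0)=-2\delta^{2}$ into the conclusion of Lemma \ref{lem8}.
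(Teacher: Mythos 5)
Your overall route is exactly the paper's: the paper proves this lemma by invoking Lemma \ref{lem8} together with the already-computed value $f'_{1}(0)=-2\delta^{2}$ and the chain $f(\alpha)\leq f_{1}(\alpha)\leq\frac{f'_{1}(0)}{2}\alpha=-\alpha\delta^{2}$, which is precisely your main line. So in substance you have reproduced the intended argument.

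One detail in your verification of the hypotheses is, however, wrong as stated. You argue that $f''_{1}$ itself is increasing because it is ``a sum of terms evaluated at $\lambda_{i}(V)+\alpha\lambda_{i}(D_{X})$'' and each such term is increasing in $\alpha$. First, the eigenvalues of $V+\alpha D_{X}$ are not $\lambda_{i}(V)+\alpha\lambda_{i}(D_{X})$, since $V$ and $D_{X}$ need not commute. Second, even granting some eigenvalue parametrization, for indices where the relevant perturbation eigenvalue is positive the argument of $\psi''$ \emph{increases} with $\alpha$, and since $\psi'''<0$ makes $\psi''$ decreasing, those summands \emph{decrease}; so monotonicity of $f''_{1}$ does not follow this way and is not true in general. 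The standard repair (implicit in the paper's appeal to the bound $f''_{1}(\alpha)\leq 2\delta^{2}\psi''(\lambda_{n}(V)-2\alpha\delta)$) is to apply Lemma \ref{lem8} not to $f_{1}$ but to the majorant $f_{2}$ defined by $f_{2}(0)=0$, $f_{2}'(0)=f_{1}'(0)=-2\delta^{2}$ and $f_{2}''(\alpha)=2\delta^{2}\psi''(\lambda_{n}(V)-2\alpha\delta)$. Then $f_{1}\leq f_{2}$ on $[0,\overline{\alpha}]$, and $f_{2}''$ is genuinely increasing in $\alpha$ because $\psi''$ is decreasing while its argument $\lambda_{n}(V)-2\alpha\delta$ decreases; Lemma \ref{lem8} applied to $f_{2}$ then gives
\[
f(\alpha)\leq f_{1}(\alpha)\leq f_{2}(\alpha)\leq\frac{\alpha f_{2}'(0)}{2}=-\alpha\delta^{2},
\]
which is the claim. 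With that substitution your proof is complete and coincides with the paper's.
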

\begin{lem}
\label{lem10}
Let $ \Psi(V)\geq1 $; $ \rho $ and $ \tilde{\alpha} $ be defined as in Lemma (\ref{lem6}) and Lemma(\ref{lem7}). Then, one has:
\begin{equation}
 f(\tilde{\alpha}) \leq -\frac{\delta^{2}}{\psi''(\rho(2\delta)}).      
\end{equation}
\end{lem}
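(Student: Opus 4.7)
The statement is a direct chaining of the two preceding lemmas, so my plan is purely bookkeeping rather than any new estimate. The main point is to observe that $\tilde{\alpha}$ is an admissible step for which Lemma \ref{lem9} already supplies the required bound.

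First, I would recall the definition from Lemma \ref{lem7}, namely $\tilde{\alpha} = 1/\psi''(\rho(2\delta))$, together with the inequality $\tilde{\alpha} \leq \overline{\alpha}$ established there. This is the only place where the detailed structure of $\psi''$ and $\rho$ is used, and it has already been proven upstream, so I do not need to redo it here.

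Next, since $\tilde{\alpha} \leq \overline{\alpha}$, the hypothesis of Lemma \ref{lem9} is satisfied with $\alpha := \tilde{\alpha}$, and that lemma yields
\[
f(\tilde{\alpha}) \;\leq\; -\tilde{\alpha}\,\delta^{2}.
\]
Substituting $\tilde{\alpha} = 1/\psi''(\rho(2\delta))$ immediately gives
\[
f(\tilde{\alpha}) \;\leq\; -\frac{\delta^{2}}{\psi''(\rho(2\delta))},
\]
which is the claim. The hypothesis $\Psi(V)\geq 1$ serves only to guarantee that $\delta$ is bounded below (via Lemma \ref{lem2}, which implies $\Psi(V) \leq 2\delta^{2}$, hence $\delta \geq 1/\sqrt{2}$), so that the bound is nontrivial and matches the regime in which the default step size has been designed.

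There is really no obstacle to anticipate: the only subtle point is making sure the direction of the inequality $\tilde{\alpha}\leq\overline{\alpha}$ is used correctly when invoking Lemma \ref{lem9} (which is monotone in $\alpha$ only through the factor $-\alpha\delta^{2}$ on the right-hand side, and this is decreasing in $\alpha$, so a smaller admissible $\alpha$ gives a weaker — but still valid — upper bound on $f(\alpha)$). Apart from that, the proof is one line of substitution.
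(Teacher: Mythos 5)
Your proposal is correct and follows essentially the same route as the paper: the paper's own proof likewise applies Lemma \ref{lem9} at $\alpha=\tilde{\alpha}$, justified by $\overline{\alpha}\geq\tilde{\alpha}$ from Lemma \ref{lem7}, and then substitutes $\tilde{\alpha}=1/\psi''(\rho(2\delta))$. Your remark on the role of $\Psi(V)\geq 1$ (forcing $\delta\geq\sqrt{\Psi/2}$ via Lemma \ref{lem2}) matches how the paper uses that hypothesis in the subsequent chain of estimates leading to the bound in terms of $\Psi_{0}$.
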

\begin{proof}\
Lemma (\ref{lem9}) and the fact that $ \overline{\alpha }\geq \tilde{\alpha}$, imply that
\[ 
\begin{array}{ccl}
f(\tilde{\alpha}) \leq -\tilde{\alpha}\delta^{2}
 = -\frac{\delta^{2}}{\psi''(\rho(2\delta)}
  \leq -\frac{\delta^{2}}{6 + 2(6\delta + 1)(1 + log(4\delta + 1))^{2}}
  \leq -\frac{\delta^{2}}{6 + 2(6\delta + \sqrt{2}\delta)(1 + log(4\delta + 1))^{2}}
 \leq -\frac{\delta^{2}}{6 + 2\delta(6 + \sqrt{2})(1 + log(4\delta + 1))^{2}}\\\\
  \leq -\frac{1}{2}\left( \frac{\Psi}{6 + 2\frac{\sqrt{\Psi}}{\sqrt{2}}(6 + \sqrt{2})(1 + log(4\frac{\sqrt{\Psi}}{\sqrt{2}} + 1))^{2}}\right) \leq -\frac{\sqrt{\Psi}}{(16 + 12\sqrt{2})(1 + log(2\sqrt{2\Psi_{0} + 1))^{2}}}
  \leq -\frac{\sqrt{\Psi_{0}}}{33(1 + log(2\sqrt{2\Psi_{0} + 1))^{2}}};          
\end{array}   
\] 
$ (16 + 12\sqrt{2})\simeq 33 $,       \      $ f(\tilde{\alpha}) \leq  -\frac{\sqrt{\Psi_{0}}}{33(1 + log(2\sqrt{2\Psi_{0} + 1))^{2}}}. $
\end{proof}.     
 \subsection{Iteration bound}
To come back to the situation where $ \Psi(V)\leq\tau $ after $ \mu-$update, we have to count how many inner iterations.\
Let the value of $ \Psi(V) $ after $ \mu-$update be denoted by $ \Psi_{0} $ and the
subsequent values by $ \Psi_{k} $, for $ k = 0, 1,\ldots ,K-1 $, where $ K $ is the total number of inner iterations per the outer iteration. Then we have
\begin{equation}
\Psi_{k-1}>\tau, 0 \leq\Psi_{k}\leq\tau 
\end{equation}
\begin{lem} \cite{peng2}
\label{lem11}
 Let $ t_{0}, t_{1}, \ldots , t_{k}$ be a sequence of positive numbers such that
\begin{center}
$ t_{k+1}\leq t_{k}-\beta t_{k}^{1-\gamma} , \textit{  } k = 0, 1, \ldots, K-1   $
\end{center}
Where   $ \beta>0  $ and $ 0<\gamma\leq1 $. Then 
\begin{center}
$ K\leq\lceil\dfrac{t_{0}^{\gamma}}{\beta\gamma}\rceil  $
\end{center}
\end{lem}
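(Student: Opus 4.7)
The plan is to exploit the concavity of the map $t \mapsto t^{\gamma}$ on $(0,\infty)$ (which holds because $0 < \gamma \le 1$) to turn the multiplicative-looking recursion into a clean additive one for $t_k^{\gamma}$, and then telescope.

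First, I would define $f(t)=t^{\gamma}$ on $(0,\infty)$, noting $f'(t)=\gamma t^{\gamma-1}$ and that $f$ is concave. Concavity gives the tangent-line inequality
\[
t_{k+1}^{\gamma} \;\le\; t_k^{\gamma} + \gamma\, t_k^{\gamma-1}(t_{k+1}-t_k).
\]
Since the hypothesis reads $t_{k+1}-t_k \le -\beta\, t_k^{1-\gamma}$ and since the factor $\gamma t_k^{\gamma-1}>0$ preserves the inequality, substitution yields
\[
t_{k+1}^{\gamma} \;\le\; t_k^{\gamma} - \gamma\, t_k^{\gamma-1}\cdot \beta\, t_k^{1-\gamma} \;=\; t_k^{\gamma} - \beta\gamma.
\]
So the $\gamma$-th powers drop by at least the fixed amount $\beta\gamma$ at every step, independently of $k$.

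Next I would telescope this one-step decrease from $k=0$ up to $k=K-1$:
\[
t_0^{\gamma} - t_K^{\gamma} \;=\; \sum_{k=0}^{K-1}\bigl(t_k^{\gamma}-t_{k+1}^{\gamma}\bigr) \;\ge\; K\beta\gamma.
\]
Because the sequence is positive, $t_K^{\gamma}\ge 0$, hence $K\beta\gamma \le t_0^{\gamma}$, i.e.\ $K \le t_0^{\gamma}/(\beta\gamma)$. Since $K$ is an integer, $K \le \lceil t_0^{\gamma}/(\beta\gamma)\rceil$, which is the stated bound.

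I do not expect a serious obstacle: the only thing to verify carefully is that each $t_k$ remains strictly positive so that $t_k^{\gamma-1}$ is defined and the tangent-line inequality applies — this is given by hypothesis (the sequence consists of positive numbers). The key conceptual step is recognizing that the exponent $1-\gamma$ on the right-hand side of the recursion exactly cancels with the $\gamma-1$ coming from differentiating $t^{\gamma}$, which is what produces the uniform decrement $\beta\gamma$ and makes the telescoping trivial.
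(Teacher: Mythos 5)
Your proof is correct: the concavity of $t\mapsto t^{\gamma}$ gives the uniform decrement $\beta\gamma$ in $t_k^{\gamma}$, and telescoping plus positivity of $t_K$ yields the bound. The paper itself states this lemma without proof, deferring to the cited reference of Peng, Roos and Terlaky, whose argument is essentially the same tangent-line/telescoping computation you give, so there is nothing to add.
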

Letting  $ t_{k}=\Psi_{k}$, \  $\beta=\frac{1}{33(1 + log(2\sqrt{2\Psi_{0}} + 1))^{2}}$,  \  $  \gamma=\frac{1}{2}$
\begin{lem}
Let $ K $ be the total number of inner iterations in the outer iteration. Then we have
\begin{center}
$ K \leq 66 \left(  1 + log(2\sqrt{2\Psi_{0}} + 1))^{2}\right)\Psi_{0}^{\frac{1}{2}}.$ 
\end{center}
\end{lem}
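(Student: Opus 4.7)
My plan is to view the sequence of proximity values $\Psi_0, \Psi_1, \ldots, \Psi_K$ as satisfying a recurrence of exactly the type handled by Lemma~\ref{lem11}, and then simply plug in the constants given just above the statement. The first step is to establish the per-iteration decrease. By the definition of $f(\alpha) = \Psi(V_+) - \Psi(V)$ and the choice of default step size $\tilde\alpha$, an inner iteration at state $\Psi_k$ produces $\Psi_{k+1} = \Psi_k + f(\tilde\alpha)$. Lemma~\ref{lem10}, applied at the current iterate (so $\Psi$ there is $\Psi_k$, while $\Psi_0$ remains the global upper bound guaranteed by the update analysis at the end of Section~3), yields
\[
\Psi_{k+1} - \Psi_k \;\leq\; -\,\frac{\sqrt{\Psi_k}}{33\bigl(1+\log(2\sqrt{2\Psi_0}+1)\bigr)^{2}}.
\]

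Second, I would rewrite this as $t_{k+1} \leq t_k - \beta\, t_k^{1-\gamma}$ with the identifications $t_k = \Psi_k$, $\gamma = \tfrac12$, and
\[
\beta \;=\; \frac{1}{33\bigl(1+\log(2\sqrt{2\Psi_0}+1)\bigr)^{2}},
\]
as explicitly indicated in the line preceding the lemma statement. Lemma~\ref{lem11} then gives immediately
\[
K \;\leq\; \left\lceil \frac{t_0^{\gamma}}{\beta\gamma}\right\rceil \;=\; \left\lceil \frac{\Psi_0^{1/2}}{\tfrac{1}{2}\cdot\tfrac{1}{33(1+\log(2\sqrt{2\Psi_0}+1))^{2}}}\right\rceil \;=\; \left\lceil 66\bigl(1+\log(2\sqrt{2\Psi_0}+1)\bigr)^{2}\,\Psi_0^{1/2}\right\rceil,
\]
which is the desired bound (absorbing the ceiling into the $O$-style estimate, or simply dropping it since the $66$ factor already provides slack).

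The only conceptually delicate point is the third step, namely justifying that $\Psi_0$, which appears in the denominator of $\beta$, is genuinely a uniform upper bound on $\Psi_k$ throughout the inner loop, so that the coefficient $\beta$ is a constant independent of $k$ in the recurrence. This is exactly the content of the theorem at the end of Section~3, which bounds $\Psi(V_+)$ after a $\mu$-update by $\Psi_0 = \tfrac{2}{1-\theta}(\sqrt{2\tau}+\sqrt n\,\theta)^2$; combined with the fact that Lemma~\ref{lem10} produces a strictly negative $f(\tilde\alpha)$ at every inner step, the sequence $\Psi_k$ is monotonically decreasing from $\Psi_0$, so replacing $\Psi_k$ by $\Psi_0$ in the logarithmic factor of the denominator is legitimate. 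With that in hand the rest is a one-line substitution into Lemma~\ref{lem11}, so I do not expect any real obstacle; the whole proof should fit in a few lines.
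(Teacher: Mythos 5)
Your proposal is correct and follows essentially the same route as the paper: the paper's proof is a one-line appeal to Lemma~\ref{lem11} with the identifications $t_k=\Psi_k$, $\gamma=\tfrac12$, and $\beta=\frac{1}{33(1+\log(2\sqrt{2\Psi_0}+1))^{2}}$ stated immediately before the lemma, exactly as you do. Your additional care in justifying that $\Psi_k$ appears in the numerator while the global bound $\Psi_0$ may be used in the logarithmic factor (since the $\Psi_k$ decrease monotonically from $\Psi_0$) is a detail the paper leaves implicit, not a different argument.
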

\begin{proof}
Using Lemma (\ref{lem11}), we get the result.
\end{proof}
Now, we estimate the total number of iterations of our algorithm.
\begin{thm}
If $ \tau\geq1 $, the total number of iterations is not more than
\begin{center}
$ 66(1 + log(2\sqrt{2\Psi_{0}}+1))^{2})\Psi_{0}^{\frac{1}{2}}  {\dfrac{1}{ \theta}   log\dfrac{n\mu^{0}}{\epsilon}} . $
\end{center}
\end{thm}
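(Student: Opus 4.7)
The plan is to multiply the bound on inner iterations per outer iteration (obtained in the previous lemma) by a standard bound on the number of outer $\mu$-updates required to drive $n\mu$ below $\epsilon$.

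First, I would handle the number of outer iterations. The algorithm updates $\mu$ by $\mu \leftarrow (1-\theta)\mu$, starting from $\mu^0$, and terminates once $n\mu \leq \epsilon$. After $k$ outer iterations the barrier parameter equals $(1-\theta)^{k}\mu^{0}$, so the stopping condition $n(1-\theta)^{k}\mu^{0}\leq\epsilon$ must be enforced. Taking logarithms and using the elementary inequality $-\log(1-\theta)\geq\theta$ for $\theta\in(0,1)$, this is satisfied as soon as
\[
k \;\geq\; \frac{1}{\theta}\log\frac{n\mu^{0}}{\epsilon}.
\]
Hence the number of outer iterations is at most $\bigl\lceil \tfrac{1}{\theta}\log\tfrac{n\mu^{0}}{\epsilon}\bigr\rceil$.

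Next, within each outer iteration, after the $\mu$-update we have $\Psi(V)\leq\Psi_{0}$ (by the bound established earlier for the updated proximity), and by the preceding lemma the number $K$ of inner Newton steps needed to restore $\Psi(V)\leq\tau$ satisfies
\[
K \;\leq\; 66\bigl(1+\log(2\sqrt{2\Psi_{0}}+1)\bigr)^{2}\,\Psi_{0}^{1/2}.
\]

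Finally, I would combine the two estimates: multiplying the per-outer-iteration inner bound by the outer-iteration bound yields the total iteration count
\[
66\bigl(1+\log(2\sqrt{2\Psi_{0}}+1)\bigr)^{2}\,\Psi_{0}^{1/2}\cdot\frac{1}{\theta}\log\frac{n\mu^{0}}{\epsilon},
\]
which is exactly the claimed bound. Since every step here is a direct appeal to the prior lemma and a standard estimate on geometric $\mu$-reduction, there is no real obstacle; the only point requiring a line of justification is the logarithmic inequality $-\log(1-\theta)\geq\theta$ that converts the geometric decrease of $\mu$ into the $1/\theta$ factor. The $\tau\geq 1$ hypothesis is used only to ensure $\Psi_{0}\geq 1$, so that the bound from Lemma~\ref{lem10} (whose derivation assumed $\Psi(V)\geq 1$) applies throughout the inner loop.
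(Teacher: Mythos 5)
Your proposal is correct and follows essentially the same route as the paper: bound the number of outer iterations by $\frac{1}{\theta}\log\frac{n\mu^{0}}{\epsilon}$ via the geometric decrease of $\mu$, invoke the preceding lemma for the inner-iteration count $K$, and multiply the two bounds. The only difference is that you make explicit the inequality $-\log(1-\theta)\geq\theta$, which the paper leaves as a "simple computation."
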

\begin{proof}
In the algorithm,$ n\mu\leq\epsilon, $   $ \mu^{k}=(1-\theta)^{k}\mu^{0}$ and  $\mu^{0}=\dfrac{x_{0}^{t}y_{0}}{n}.$
By simple computation, we have
\begin{center}
$ k\leq{\dfrac{1}{ \theta} log\frac{n\mu^{0}}{\epsilon}}. $
\end{center}
By multiplying the number of outer iterations and the number of inner
iterations, we get an upper bound for the total number of iterations, namely
\begin{center}
$ {\dfrac{K}{\theta}}log{\frac{n\mu^{0}}{\epsilon}}\leq {\dfrac{66}{\theta}(1 + log(2\sqrt{2\Psi_{0}}+1))^{2})\Psi_{0}^{\frac{1}{2}}log\dfrac{n\mu^{0}}{\epsilon}}. $
\end{center}
This completes the proof.
\end{proof}
we assume that $ \tau=O(n) $,   $ \theta=\Theta(1) $ and $  \Psi_{0}^{\frac{1}{2}}=O(\sqrt{n}.)$\\
The algorithm will obtain the solution of the problem at most $ O(\sqrt{n}(logn)^{2}log\dfrac{n}{\epsilon})$.
 \section{Numerical results}
 The main purpose of this section is to present three monotone SDLCPs for testing the effectiveness of algorithm. The implementation is manipulated in "Matlab". Here we use "inn", "out" and "T" which means the inner,  outer iterations number and the time produced by the algorithm 1, respectively.  The choice of different values of the parameters shows their effect on reducing the number of iterations. \\
 
 In all experiments,  we use $ \tau=2 $ , $ \epsilon=10^{-6} $,  $  \theta \in\lbrace   0.5,   0.89,  0.95   \rbrace  $, and     $ \alpha \in\lbrace  0.3,  0.5,  0.7,  0.8,  0.9, 1,  1/log(4\delta),  1/1+log(4\delta+1)   \rbrace $ ,  the theoretical barrier parameter $ \mu_{0}\in\lbrace  Tr(XY)/n,   0.05,   0.005,   0.0005 \rbrace $. We provide a feasible initial point $ (X_{0},Y_{0})$ such that IPC and $\Psi(X_{0},Y_{0},\mu_{0})\leqslant\tau $ are satisfied.\\
 
 The first example is the monotone SDLCP defined by two sided multiplicative linear transformation \cite{achache} . The second is monotone SDLCP which is equivalent to the symmetric semidefinite least squares(SDLS)problem  and the third one is reformulated from nonsymmetric semidefinite least squares(NS-SDLS)problem \cite{krislock}, in the second and third example, $L$ is Lyaponov linear transformation. \\  
 \newpage
 \begin{expl}
 The data of the monotone SDLCP is given by  $  L(X)=AXA^{T}, $\\
   where \\
\[
A=\left( 
\begin{array}{ccccccccl}
17.25 & -1.75 & -1.75 & -1.75 & -1.75 \\ 
-1.75 & 16.25 & -2 & 0  & 0 \\ 
-1.75 & -2 &16.25  & -2 &0  \\ 
-1.75 & 0 &-2 & 16.25 &-2  \\ 
-1.75 & 0 &  0& -2 &16.25 &\\
\end{array}
\right) 
\]
 \\
and 
\[
Q= \left( 
\begin{array}{cccccl }
-9.25 & 1.25 & 1.25 & 1.25 & 1.25 \\ 
1.25 & -8.25 & 1.5 &0  & 0 \\ 
1.25 & 1.5 &-8.25  & 1.5 &0  \\ 
1.25  & 0 &1.5 & -8.25 &1.5  \\ 
1.25 & 0 &  0& 1.5 &-8.25 &\\
\end{array}
\right) 
\]\\
The strictly feasible initial starting point $ X^{0}\succ 0 $ is given by $ X^{0}=Diag(0.0620, \ldots ,0.0620). $\\ 
The unique solution $X^{*} \in S_{+}^{5}  $  is given by \\
\[
X^{*}=\left( 
\begin{array}{ccccc}
0.0313 & 0.0020 & 0.0020 & 0.0020 & 0.0020\\ 
0.0020 & 0.0313 & 0.0019 &0  & 0 \\ 
0.0020& 0.0019 &0.0312  & 0.0019 & 0  \\ 
0.0020 & 0 &0.0019 & 0.0312 &0.0019  \\ 
0.0020 & 0 &  0& 0.0019&0.0313 \\
\end{array}
\right) 
 \]\\
The number of inner, outer iterations and the time for several choices of $ \alpha $, $\theta $ and  $\mu $ obtained by algorithm 1 are presented in the following tables
\begin{center}
\begin{table} [!h]
\begin{center}
\begin{tabular}{|c|c|c|c|c|}
\hline
                    $ \theta=0.5 $ \\
                                    
\hline
$  \alpha /\mu$ &$ Tr(XY)/n $ & $ 0.05$ & $ 0.005$  & $ 0.0005$\\
\hline
    &  inn / out / T  &  inn / out / T  &  inn /out / T  & inn /  out / T \\
 \hline   
0.3 & 74 / 22 / 0.17 & 71 / 19 /0.17  & 65 / 15 / 0.15 &  61 / 12 / 0.15  \\
\hline 
0.5 & 43 / 22 / 0.12 &  41 /19 /0.12  &  37 / 15 / 0.12 & 35 / 12 /0.10  \\
\hline 
0.7 & 23 / 22 / 0.09 &  22 / 19 / 0.17  &  20 / 15 / 0.07 &  19 / 12 / 0.07 \\
\hline 
0.9 & 22/  22 / 0.10 &  21 / 19 / 0.15 &  18 / 15 / 0.07  &  16 / 12 / 0.09\\
\hline
1 & 22 / 22 / 0.10 &  19 / 18 / 0.09  &  17 / 15 /0.07  &  15 / 12 / 0.07 \\
\hline
1/log(4$\delta$)& 7 /  22 / 0.07 & 16 / 19 / 0.09 &  12 / 15 / 0.09 &  25 / 12 /0.10 \\
\hline 
1/log(4$\delta$+1)&   20 / 22  / 0.09 & 21 / 19 / 0.06 & 24 / 15 / 0.09 &  30 / 12 /0.10 \\
\hline                                  
\end{tabular}
\end{center}
\begin{center}
\caption{Number of inner, outer iterations and the time for several choices of $\alpha $  and  $\mu $with $\theta=0.5 $ }
\end{center}
\end{table}
\end{center}
\newpage 
\begin{center}
\begin{table} [!h]
\begin{center}
\begin{tabular}{|c|c|c|c|c|}
\hline
                    $ \theta=0.89 $ \\
                                    
\hline
$  \alpha /\mu$ &$ Tr(XY)/n  $ & $ 0.05$ & $ 0.005$  & $ 0.0005$\\
\hline
    &  inn / out / T  &  inn / out / T  &  inn /out / T  & inn /  out / T \\
 \hline   
0.3 & 58 / 7 / 0.14 & 57 / 6 /0.14  & 55 / 5 / 0.12 &  53 / 4 / 0.14  \\
\hline 
0.5 & 31 / 7 / 0.10 &  31 /6 /0.10  &  30 / 5 / 0.09 & 28 / 4 /0.09  \\
\hline 
0.7 & 21 / 7 / 0.09 &  20 / 6 / 0.07  &  19 / 5 / 0.09 &  18 / 4 / 0.09 \\
\hline 
0.9 & 14/  7 / 0.07 &  14 / 6 / 0.07 &  13 / 5 / 0.06  &  12 / 4 / 0.06\\
\hline
1 & 9 / 7 / 0.06 &  8 / 6 / 0.06  &  8 / 5 /0.06  &  7 / 4 / 0.06 \\
\hline
1/log(4$\delta$)& 14 /  7 / 0.07 & 20 / 6 / 0.07 &  22 / 5 / 0.09 &  36 / 4 /0.10 \\
\hline 
1/log(4$\delta$+1)&   22 / 7  / 0.07 & 24 / 6 / 0.09 & 30 / 5 / 0.09 &  36 / 4 /0.10 \\
\hline                                    
\end{tabular}
\end{center}
\begin{center}
\caption{Number of inner, outer iterations and the time for several choices of $ \alpha $ and $\mu $with $\theta=0.89 $ }
\end{center}
\end{table}
\end{center}
\begin{center}
\begin{table} [!h]
\begin{center}
\begin{tabular}{|c|c|c|c|c|}
\hline
                    $ \theta=0.95 $ \\                                 
\hline
$  \alpha /\mu$ &$ Tr(XY)/n  $ & $ 0.05$ & $ 0.005$  & $ 0.0005$\\
\hline
     &  inn / out / T  &  inn / out / T  &  inn /out / T  & inn /  out / T \\
 \hline   
0.3 & 53 / 5 / 0.12 & 60 / 5 /0.14  & 56 / 4 / 0.14 &  51 / 3 / 0.10  \\
\hline 
0.5 & 29 / 5 / 0.09 &  33 /5 /0.09  &  30 / 4 / 0.09 & 28 / 3 /0.09  \\
\hline 
0.7 & 18 / 5 / 0.07 &  20 / 5 / 0.07  &  18 / 4 / 0.07 &  17 / 3 / 0.07 \\
\hline 
0.9 & 11/  5 / 0.07 &  12 / 5 / 0.07 &  11 / 4 / 0.07  &  10 / 3 / 0.06\\
\hline
1 & 7 / 5 / 0.06 &  7 / 5 / 0.06  &  7 / 4 /0.06  &  6 / 3 / 0.06 \\
\hline
1/log(4$\delta$)& 20 /  5 / 0.07 & 23 / 5 / 0.07 &  32 / 4 / 0.10 &  38 / 3 /0.10 \\
\hline 
1/log(4$\delta$+1)&   24 / 5  / 0.09 & 31 / 5 / 0.09 & 34 / 4 / 0.09 &  41 / 3 /0.10 \\
\hline                           
\end{tabular}
\end{center}
\begin{center}
\caption{Number of inner, outer iterations and the time for several choices of $ \alpha $ and $\mu $with $\theta=0.95 $ }
\end{center}
\end{table}
\end{center}
 \end{expl}
 \begin{expl}
 The data of the monotone SDLCP which is equivalent to the symmetric semidefinite least squares (SDLS) problem is given by  $ L(X)=\frac{1}{2}(A^{T}AX+XA^{T}A)   \textit{ and }   Q = -\frac{1}{2}(A^{T}B+B^{T}A).$
   Where   
\[
A=\left( 
\begin{array}{ccccccccl}
6 & -1 & 0 & 0 & 0 \\ 
-0.1 & 6 & -1 & 0 & 0 \\ 
0 & -0.1 &6  & -1 & 0  \\ 
0 & 0 &-0.1 & 6 &-1  \\ 
0 & 0 &  0& -0.1 &6\\
0 & 0 &  0& 0 &-0.1 &\\
\end{array}
\right) 
\]
 \\ 
 and
\[
B= \left( 
\begin{array}{cccccl }
 1   & 0 & 0 & 0 & 0 \\ 
-0.4 & 1 & 0 & 0  & 0 \\ 
-0.4 & -0.4  &  1  & 0 & 0  \\ 
-0.4 & 0 &-0.4 & 1 & 0  \\ 
-0.4 & 0 &  0 & -0.4 & 1 \\
-0.4 & 0 & 0 & 0& -0.4\\
\end{array}
\right) 
\]
\\

The strictly feasible initial point $ X^{0}\succ 0 $ defined by $ X^{0}=Diag(0.2369, \ldots ,0.2369). $\\
The unique solution $ X^{*} \in S_{+}^{5}  $ of the proposed example is given by\\
 
 \[
X^{*}=\left( 
\begin{array}{ccccc}
 0.1639 & -0.0215 & -0.0342 & -0.0328 & -0.0300\\ 
-0.0215 &  0.1553 & -0.0227 & -0.0019 & -0.0027 \\ 
-0.0342 & -0.0227 &  0.1558 & -0.0194 &  0.0014  \\ 
-0.0328 & -0.0019 & -0.0194 &  0.1564 & -0.0189  \\ 
-0.0300 & -0.0027 &  0.0014 & -0.0189 &  0.1598 \\
\end{array}
\right) 
 \]\\
 
The number of inner, outer iterations and the time for several choices of $ \alpha$, $\theta$ and $\mu $ are presented in the following tables.
\begin{center}
\begin{table} [!h]
\begin{center}
\begin{tabular}{|c|c|c|c|c|}
\hline
                    $ \theta=0.5 $ \\                           
\hline
$  \alpha /\mu$ &$ Tr(XY)/n $ & $ 0.05$ & $ 0.005$  & $ 0.0005$\\
\hline
     &  inn / out / T  &  inn / out / T  &  inn /out / T  & inn /  out / T \\
 \hline   
0.3 & 74 / 22 / 0.21 & 69 / 18 /0.20  & 65 / 15 / 0.07 &  61 / 12 / 0.18  \\
\hline 
0.5 & 43 / 22 / 0.15 &  40 /18 /0.15  &  37 / 15 / 0.17 & 35 / 12 /0.15  \\
\hline 
0.7 & 23 / 22 / 0.11 &  21 / 18 / 0.11  &  20 / 15 / 0.11 &  19 / 12 / 0.10 \\
\hline 
0.9 & 22/  22 / 0.10 &  20 / 18 / 0.12 &  18 / 15 / 0.09  &  16 / 12 / 0.10\\
\hline
1 & 22 / 22 / 0.12 &  19 / 18 / 0.12  &  17 / 15 /0.11  &  15 / 12 / 0.06 \\
\hline
1/log(4$\delta$)& 8 /  22 / 0.07 & 12 / 18 / 0.11 &  18 / 15 / 0.12 &  26 / 12 /0.14 \\
\hline 
1/log(4$\delta$+1)&   20 / 22  / 0.10 & 21 / 18 / 0.10 & 25 / 15 / 0.10 &  31 / 12 /0.12 \\
\hline                                  
\end{tabular}
\end{center}
\begin{center}
\caption{Number of inner, outer iterations and the time for several choices of $ \alpha$, and $\mu$ with $\theta=0.5 $. }
\end{center}
\end{table}
\end{center}
\begin{center}
\begin{table} [!h]
\begin{center}
\begin{tabular}{|c|c|c|c|c|}
\hline
                    $ \theta=0.89 $ \\                            
\hline
$  \alpha /\mu$ &$ Tr(XY)/n $ & $ 0.05$ & $ 0.005$  & $ 0.0005$\\
\hline
    &  inn / out / T  &  inn / out / T  &  inn /out / T  & inn /  out / T \\
 \hline   
0.3 & 58 / 7 / 0.20 & 57 / 6 /0.20  & 55 / 5 / 0.17 &  53 / 4 / 0.17  \\
\hline 
0.5 & 31 / 7 / 0.12 &  31 /6 /0.12  &  29 / 5 / 0.10 & 28 / 4 /0.12  \\
\hline 
0.7 & 21 / 7 / 0.10 &  20 / 6 / 0.10  &  19 / 5 / 0.07 &  18 / 4 / 0.10 \\
\hline 
0.9 & 14/  7 / 0.09 &  14 / 6 / 0.09 &  13 / 5 / 0.07  &  12 / 4 / 0.07\\
\hline
1 & 8 / 7 / 0.04 &  8 / 6 / 0.06  &  8 / 5 /0.06  &  8 / 4 / 0.07 \\
\hline
1/log(4$\delta$)& 19 /  7 / 0.07 & 23 / 6 / 0.10 &  29 / 5 / 0.12 &  31 / 4 /0.14 \\
\hline 
1/log(4$\delta$+1)&   23 / 7  / 0.10 & 26 / 6 / 0.10 & 31 / 5 / 0.14 &  38 / 4 /0.15 \\
\hline                               
\end{tabular}
\end{center}
\begin{center}
\caption{Number of inner, outer iterations and the time for several choices of $ \alpha$, and $\mu $with $\theta=0.89 $.}
\end{center}
\end{table}
\end{center} 
\begin{center}
\begin{table} [!h]
\begin{center}
\begin{tabular}{|c|c|c|c|c|}
\hline
                    $ \theta=0.95 $ \\                                  
\hline
$  \alpha /\mu$ &$ Tr(XY)/n $ & $ 0.05$ & $ 0.005$  & $ 0.0005$\\
\hline
     &  inn / out / T  &  inn / out / T  &  inn /out / T  & inn /  out / T \\
 \hline   
0.3 & 54 / 5 / 0.18 & 61 / 5 /0.20  & 56 / 4 / 0.20&  52 / 3 / 0.18  \\
\hline 
0.5 & 29 / 5 / 0.12 &  33 /5 /0.10  &  30 / 4 / 0.12 & 28 / 3 /0.09  \\
\hline 
0.7 &18 / 5 / 0.09 &  20 / 5 / 0.06  &  18 / 4 / 0.07 &  17 / 3 / 0.09 \\
\hline 
0.9 & 11/  5 / 0.07 &  12 / 5 / 0.07 &  11 / 4 / 0.07  &  10 / 3 / 0.06\\
\hline
1 & 7 / 5 / 0.07 &  7 / 5 / 0.04  &  7 / 4 /0.06  &  7 / 3 / 0.07 \\
\hline
1/log(4$\delta$)& 18 /  5 / 0.07 & 21 / 5 / 0.07 &  25 / 4 / 0.12 &  41 / 3 /0.17 \\
\hline 
1/log(4$\delta$+1)&   25 / 5  / 0.10 & 32 / 5 / 0.12 & 36 / 4 / 0.12 &  42 / 3 /0.14 \\
\hline                                
\end{tabular}
\end{center}
\begin{center}
\caption{Number of inner, outer iterations and the time for several choices of $\alpha$, and $\mu$ with $\theta=0.95 $. }
\end{center}
\end{table}
\end{center} 
\end{expl}
\newpage
 \begin{expl}
 
 We consider the monotone SDLCP which is reformulated from NS-SDLS problem:\\
 The matrices $ A $ and $ B $ of  NS-SDLS are given by\\
 
\[
A=\left( 
\begin{array}{ccccccccl}
-0.3157 &  0.0330  &  0.0603 \\ 
-0.3274 & -0.0158  &  0.0625 \\ 
-0.3569 &  0.0787  &  0.0563   \\ 
-0.2994 &  0.0301  &  0.0496  \\ 
-0.3243 & -0.0048  &  0.0715 \\
-0.3447 &  0.0736  &  0.0545 \\
-0.2417 &  0.0709  &  0.0522  \\
-0.2063 & -0.0099  &  0.0233  \\
-0.3285 &  0.1585  &  0.0979  \\
-0.2484 &  0.0878  &  0.0622  \\
-0.2196 &  0.0023  &  0.0280  \\
-0.3148 &  0.1506  &  0.0922 \\
\end{array}
\right) 
\]\\
and
 \\
 \[
B=\left( 
\begin{array}{ccccccccl}
-1.4257 &  0.1528  &  0.4398 \\ 
-1.4024 & -0.3092  &  0.4187 \\ 
-1.3766 &  0.4366  &  0.4197 \\ 
-1.4274 &  0.1424  &  0.4353 \\ 
-1.3994 & -0.3095  &  0.4206 \\
-1.3716 &  0.4285  &  0.4193 \\
-1.4269 &  0.1581  &  0.4335 \\
-1.4015 &  0.3229  &  0.4214 \\
-1.3767 & -0.4189  &  0.4333 \\
-1.4257 &  0.1515  &  0.4358 \\
-1.3989 &  0.3276  &  0.4217 \\
-1.3724 &  0.1454  &  0.4356 \\
\end{array}
\right) 
\]
\\
Lyapunov linear transformation $ L(X) $ is symmetric and strictly monotone given by \\

$ L(X)=\frac{1}{2}(G^{-1}X+XG^{-1})$   \  and \  $ Q = -\frac{1}{2}(G^{-1}A^{T}B+B^{T}AG^{-1}).$\\
   
   Where  $ G = A^{T}A  $. The unique solution $ X^{*} $ is given by
   \\
   
  \[
X^{*} = \left( 
\begin{array}{ccccccccl}
 5.1595 &  0.3075  &  2.3185 \\ 
-0.8348 &  6.2621  &  -8.1377 \\ 
 1.5400 & -0.0070  &  0.6169 \\ 
\end{array}
\right) 
\]
\\

The number of inner, outer iterations and the time for several choices of $ \alpha$, $\mu $ and $ \theta \in \lbrace
   0.50,   0.80,   0.90 \rbrace$ with feasible starting point $ X^{0}= I $  are presented in the following tables.
\begin{center}
\begin{table} [!h]
\begin{center}
\begin{tabular}{|c|c|c|c|c|}
\hline
                    $ \theta=0.50 $ \\                               
\hline
$  \alpha /\mu$ &$ Tr(XY)/n $ & $ 0.05$ & $ 0.005$  & $ 0.0005$\\
\hline
     &  inn / out / T  &  inn / out / T  &  inn /out / T  & inn /  out / T \\
 \hline   
0.3 & 6 / 22 / 0.07 &  6/18/0.07 & 6 / 14 /0.06& 6/11/0.06   \\
\hline 
0.5 & 3 / 22 / 0.06 &  3/18/0.06  & 3 /14 /0.06  & 3/11/0.06  \\
\hline 
0.7 & 2 / 22 / 0.07 &  2/18/0.06  & 2 / 14 / 0.06  &2/11/0.04   \\
\hline 
0.9 & 2/  22 / 0.06 & 2/18/0.04  &  2 / 14 / 0.06  & 2/11/0.04 \\
\hline
1 & 2 / 22 / 0.06 &  2/18/0.06  & 2 / 14 / 0.06   &  2/11/0.06 \\
\hline
1/log(4$\delta$)& 15 /  22 / 0.09 & 17/ 18 / 0.09 & 15/ 14 /0.06 & 15 / 11 / 0.06   \\
\hline 
1/log(4$\delta$+1)&   15 / 22  / 0.07 & 17 / 18 / 0.09 & 15/ 14 /0.09 & 15 / 11 / 0.09   \\
\hline                                   
\end{tabular}
\end{center}
\begin{center}
\caption{Number of inner, outer iterations and the time for several choices of $\alpha$, and $\mu$ with $\theta=0.5 $. }
\end{center}
\end{table}
\end{center}
\newpage
\begin{center}
\begin{table} [!h]
\begin{center}
\begin{tabular}{|c|c|c|c|c|}
\hline
                    $ \theta=0.80 $ \\                                  
\hline
$  \alpha /\mu$ &$ Tr(XY)/n $ & $ 0.05$ & $ 0.005$  & $ 0.0005$\\
\hline
      &  inn / out / T  &  inn / out / T  &  inn /out / T  & inn /  out / T \\
 \hline   
0.3 & 6 / 10 / 0.04 &  6 / 8 / 0.06 & 6 / 6 /0.04 & 6 / 5 / 0.06   \\
\hline 
0.5 & 3 / 10 / 0.04 &  3 / 8 / 0.04  & 3 / 6 /0.06  & 3 / 5 / 0.06  \\
\hline 
0.7 & 2 / 10 / 0.04 &  2 / 8 / 0.06  & 2 / 6 / 0.06  & 2 / 5 / 0.04   \\
\hline 
0.9 & 2/  10 / 0.04 & 2 / 8 / 0.06  &  2 / 6 / 0.04  & 2/ 5 /0.06 \\
\hline
1 & 2 / 10 / 0.06 &  2 / 8 / 0.06  & 2 / 6 / 0.06   &  2/ 5 /0.04 \\
\hline
1/log(4$\delta$)& 16 /  10 / 0.07 & 17 / 8 / 0.09 & 15 /6 / 0.07 & 17 / 5 / 0.09    \\
\hline 
1/log(4$\delta$+1)&   16 / 10  / 0.04 & 17 / 8 / 0.07 & 16/ 6 / 0.07 & 17 / 5 / 0.07    \\
\hline                                  
\end{tabular}
\end{center}
\begin{center}
\caption{Number of inner, outer iterations and the time for several choices of $ \alpha$, and $\mu$ with$\theta=0.80 $.}
\end{center}
\end{table}
\end{center}
\newpage 
\begin{center}
\begin{table} [!h]
\begin{center}
\begin{tabular}{|c|c|c|c|c|}
\hline
                    $ \theta=0.90 $ \\                            
\hline
$  \alpha /\mu$ &$ Tr(XY)/n $ & $ 0.05$ & $ 0.005$  & $ 0.0005$\\
\hline
     &  inn / out / T  &  inn / out / T  &  inn /out / T  & inn /  out / T \\
 \hline   
0.3 & 6 / 7 / 0.06 &  6/ 6 /0.04 & 6 / 5 /0.06 & 6/ 4 / 0.06   \\
\hline 
0.5 & 3 / 7 / 0.04 &  3/ 6 /0.04  & 3 /5 /0.04  & 3/ 4 / 0.04  \\
\hline 
0.7 & 2 / 7 / 0.04 &  2/ 6 /0.06  & 2 / 5 / 0.04  & 2 / 4 / 0.06   \\
\hline 
0.9 & 2/  7 / 0.06 & 2/ 6 /0.06  &  2 / 5 / 0.06  & 2 / 4 /0.04 \\
\hline
1 & 2 / 7 / 0.04 &  2/ 6 /0.04  & 2 / 5 / 0.06   &  2 / 4 / 0.06 \\
\hline
1/log(4$\delta$)& 16 / 7 / 0.07 & 17/ 6 /0.07 & 17/5/0.07 & 17 / 4 / 0.07    \\
\hline 
1/log(4$\delta$+1)&   16 / 7  / 0.06 & 17/ 6 /0.07 & 17/5/0.07 & 17 / 4 / 0.06    \\
\hline                            
\end{tabular}
\end{center}
\begin{center}
\caption{Number of inner, outer iterations and the time for several choices of $ \alpha$, and $\mu$ with $\theta=0.90$.}
\end{center}
\end{table}
\end{center}
 \end{expl}
The results in these tables show that the algorithm based on our kernel function $ \psi(t) $ is effective, and it's number of iterations depends on the values of the parameters $ \alpha $, $ \theta $ and $ \mu $.
For all possible combinations of this parameters in practical computation, we obtained the better results than those by recent kernel functions.
\section{Conclusion} 
In this paper, we introduced a new kernel function, which contributed well to creating a new design for primal-dual interior-point algorithms. We show that the iteration bound of large-update interior point method is  $ O(\sqrt{n}(logn)^{2}log\dfrac{n}{\epsilon})$, which improves the best iteration complexity. Finally, the numerical results obtained are excellent,which indicated that our kernel function used in algorithm 1 is efficient.  

\end{document}